\definecolor{verylight}{gray}{0.97}
\definecolor{light}{gray}{0.9}
\definecolor{medium}{gray}{0.85}
\definecolor{dark}{gray}{0.6}
\def\NZQ{\mathbb}               
\def\QQ{{\NZQ Q}}
\def\ZZ{{\NZQ Z}}
\def\CC{{\NZQ C}}
\def\G{{\mathcal G}}
\def\pd{\textup{proj}\phantom{.}\!\textup{dim}}
\def\opn#1#2{\def#1{\operatorname{#2}}} 
\opn\chara{char} \opn\length{\ell} \opn\pd{pd} \opn\rk{rk}
\opn\projdim{proj\,dim} \opn\injdim{inj\,dim} \opn\rank{rank}
\opn\depth{depth} \opn\grade{grade} \opn\height{height}
\opn\embdim{emb\,dim} \opn\codim{codim}
\opn\Tr{Tr} \opn\bigrank{big\,rank}
\opn\superheight{superheight}\opn\lcm{lcm}
\opn\trdeg{tr\,deg}
\opn\reg{reg} \opn\lreg{lreg} \opn\ini{in} \opn\lpd{lpd}
\opn\size{size} \opn\sdepth{sdepth}
\opn\link{link}\opn\fdepth{fdepth}\opn\lex{lex}
\opn\tr{tr}
\opn\type{type}
\opn\gap{gap}
\opn\diam{diam}
\opn\Mod{Mod}
\opn\div{div} \opn\Div{Div} \opn\cl{cl} \opn\Cl{Cl}
\opn\Spec{Spec} \opn\Supp{Supp} \opn\supp{supp} \opn\Sing{Sing}
\opn\Ass{Ass} \opn\Min{Min}\opn\Mon{Mon}
\opn\Ann{Ann} \opn\Rad{Rad} \opn\Soc{Soc}
\opn\Im{Im} \opn\Ker{Ker} \opn\Coker{Coker} \opn\Am{Am}
\opn\Hom{Hom} \opn\Tor{Tor} \opn\Ext{Ext} \opn\End{End}
\opn\Aut{Aut} \opn\id{id}
\opn\nat{nat}
\opn\pff{pf}
\opn\Pf{Pf} \opn\GL{GL} \opn\SL{SL} \opn\mod{mod} \opn\ord{ord}
\opn\Gin{Gin} \opn\Hilb{Hilb}\opn\sort{sort}
\opn\PF{PF}\opn\Ap{Ap}
\opn\dist{dist}
\opn\aff{aff}
\opn\relint{relint} \opn\st{st}
\opn\lk{lk} \opn\cn{cn} \opn\core{core} \opn\vol{vol}  \opn\inp{inp} \opn\nilpot{nilpot}
\opn\link{link} \opn\star{star}\opn\lex{lex}\opn\set{set}
\opn\width{wd}
\opn\Fr{F}
\opn\QF{QF}
\opn\G{G}
\opn\type{type}\opn\res{res}
\opn\conv{conv}
\opn\sr{sr}
\opn\gr{gr}
\def\pot#1#2{#1[\kern-0.28ex[#2]\kern-0.28ex]}
\opn\dirlim{\underrightarrow{\lim}}
\opn\inivlim{\underleftarrow{\lim}}
\def\Implies{\ifmmode\Longrightarrow \else
	\unskip${}\Longrightarrow{}$\ignorespaces\fi}
\def\implies{\ifmmode\Rightarrow \else
	\unskip${}\Rightarrow{}$\ignorespaces\fi}
\def\iff{\ifmmode\Longleftrightarrow \else
	\unskip${}\Longleftrightarrow{}$\ignorespaces\fi}
\newtheorem{Theorem}{Theorem}[section]
\newtheorem{Lemma}[Theorem]{Lemma}
\newtheorem{Corollary}[Theorem]{Corollary}
\newtheorem{Proposition}[Theorem]{Proposition}
\newtheorem{Remark}[Theorem]{Remark}
\newtheorem{Example}[Theorem]{Example}
\newtheorem{Conjecture}[Theorem]{Conjecture}
\let\epsilon\varepsilon
\let\kappa=\varkappa
\def\qed{\ifhmode\textqed\fi
	\ifmmode\ifinner\hfill\quad\qedsymbol\else\dispqed\fi\fi}
\def\textqed{\unskip\nobreak\penalty50
	\hskip2em\hbox{}\nobreak\hfill\qedsymbol
	\parfillskip=0pt \finalhyphendemerits=0}
\def\dispqed{\rlap{\qquad\qedsymbol}}
\opn\dis{dis}
\def\pnt{{\raise0.5mm\hbox{\large\bf.}}}
\opn\Lex{Lex}
\opn\Max{Max}
\opn\Shad{Shad}
\opn\astab{astab}
\def\p{\mathfrak{p}}
\def\q{\mathfrak{q}}
\def\m{\mathfrak{m}}
\opn\v{v}
\begin{document}

	\title{Simon Conjecture and\\ the $\mbox{v}$-number of monomial ideals}
	\author{Antonino Ficarra}
	
	\address{Antonino Ficarra, Department of mathematics and computer sciences, physics and earth sciences, University of Messina, Viale Ferdinando Stagno d'Alcontres 31, 98166 Messina, Italy}
	\email{antficarra@unime.it}
	
	\thanks{
	}
	
	\subjclass[2020]{Primary 13F20; Secondary 13F55, 05C70, 05E40.}
	
	\keywords{graded ideals, $\v$-number, asymptotic behaviour, primary decomposition}
	
	\maketitle
	
	\begin{abstract}
		Let $I\subset S$ be a graded ideal of a standard graded polynomial ring $S$ with coefficients in a field $K$, and let $\v(I)$ be the $\v$-number of $I$. In previous work, we showed that for any graded ideal $I\subset S$ generated in a single degree, then $\v(I^k)=\alpha(I)k+b$, for all $k\gg0$, where $\alpha(I)$ is the initial degree of $I$ and $b$ is a suitable integer. In the present paper, using polarization, we extend Simon conjecture to any monomial ideal. As a consequence, if Simon conjecture holds, and all powers of $I$ have linear quotients, then $b\in\{-1,0\}$. This fact suggest that if $I$ is an equigenerated monomial ideal with linear powers, then $\v(I^k)=\alpha(I)k-1$, for all $k\ge1$. We verify this conjecture for monomial ideals with linear powers having $\depth S/I=0$, edge ideals with linear resolution, polymatroidal ideals, and Hibi ideals.
	\end{abstract}
	
	\section{Introduction}
	
	Let $S=K[x_1,\dots,x_n]=\bigoplus_dS_d$ be the standard graded polynomial ring with $n$ variables and coefficients in a field $K$, and let $\m=(x_1,\dots,x_n)$ be the graded maximal ideal. We denote the set of associated primes of $I$ by $\Ass(I)$, and by $\Max(I)$ the set of associated primes of $I$ that are maximal with respect to the inclusion. The concept of $\v$-number was introduced in \cite{CSTVV20}, and further studied in  \cite{ASS23,BM23,Civan2023,FS2,GRV21,JS23,JV21,S2023,SS20,VPV23}. Let $I\subset S$ be a graded ideal and let $\p\in\Ass(I)$. Then, the \textit{$\v$-number of $I$ at $\p$} is defined as
	$$
	\v_\p(I)\ =\ \min\{d\ :\ \textit{there exists}\ f\in S_d\ \textit{such that}\ (I:f)=\p\}.
	$$
	Whereas, the \textit{$\v$-number of $I$} is defined as
	$$
	\v(I)\ =\ \min\{d\ :\ \textit{there exists}\ f\in S_d\ \textit{such that}\ (I:f)\in\Ass(I)\}.
	$$
	
	Let $I\subset S$ be a graded ideal. In \cite{FS2} the asymptotic behaviour of the function $\v(I^k)$, called the \textit{$\v$-function} of $I$, was investigated. It is known by Brodmann \cite{B79} that $\Ass(I^k)$ stabilizes for large $k$. That is, $\Ass(I^{k+1})=\Ass(I^k)$ for all $k\gg0$. A prime ideal $\p\subset S$ such that $\p\in\Ass(I^k)$ for all $k\gg0$, is called a \textit{stable prime of $I$}. The set of the stable primes of $I$ is denoted by $\Ass^{\infty}(I)$. Thus, for all $k\gg0$, $$\v(I^k)=\min_{\p\in\Ass^{\infty}(I)}\v_\p(I^k).$$ 
	
	It is expected that for any graded ideal $I\subset S$, $\v(I^k)$ becomes a linear function in $k$ for $k\gg0$ \cite[Conjecture 4.5]{FS2}. Such a conjecture has been proved when $I$ is generated in a single degree, when $I$ does not have embedded stable primes \cite[Theorem 3.1]{FS2}, and for all monomial ideals in two variables \cite[Theorem 5.1]{FS2}.
	
	To compute the $\v$-number, we use the following result of Grisalde, Reyes and Villarreal \cite[Theorem 3.2]{GRV21}. Let $M=\bigoplus_{d\ge0}M_d\ne0$ be a finitely generated graded $S$-module. We call $\alpha(M)=\min\{d:M_d\ne0\}$, the \textit{initial degree} of $M$, and set $\omega(M)=\max\{d:(M/\m M)_d\ne0\}$.  The bar $\overline{\phantom{p}}$ denotes the residue class modulo $I$.
	\begin{Theorem}\label{Thm:GRVtheorem}
		Let $I\subset S$ be a graded ideal and let $\p\in\Ass(I)$. The following hold.
		\begin{enumerate}
			\item[\textup{(a)}] If $\mathcal{G}=\{\overline{g_1},\dots,\overline{g_r}\}$ is a homogeneous minimal generating set of $(I:\p)/I$, then
			$$
			\v_\p(I)=\min\{\deg(g_i)\ :\ 1\le i\le r\ \textit{and}\ (I:g_i)=\p\}.
			$$
			\item[\textup{(b)}] $\v(I)=\min\{\v_\p(I):\p\in\Ass(I)\}$.
			\item[\textup{(c)}] $\v_\p(I)\ge\alpha((I:\p)/I)$, with equality if $\p\in\Max(I)$.
			\item[\textup{(d)}] If $I$ has no embedded primes, then $\v(I)=\min\{\alpha((I:\p)/I):\p\in\Ass(I)\}$.
		\end{enumerate}
	\end{Theorem}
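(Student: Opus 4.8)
The plan is to prove the four parts in the order (a), (b), (c), (d): once (a) is in hand the other three follow with almost no extra work, so part (a) carries the real content.

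For \textbf{(a)}, I would put $N=(I:\p)/I$, a finitely generated graded module annihilated by $\p$ and hence a module over the domain $S/\p$. The first step is a dictionary: for homogeneous $f\in S$ one has $(I:f)=\p$ exactly when $\overline f\in N$ and no element of $S\setminus\p$ kills $\overline f$. Indeed $\overline f\in N$ already forces $\p\subseteq(I:f)\subsetneq S$, and the no-killer condition gives the reverse inclusion $(I:f)\subseteq\p$. Thus $\v_\p(I)$ is the least degree of such an $f$, and $\v_\p(I)\le\min\{\deg g_i:(I:g_i)=\p\}$ is immediate from the definition. For the reverse inequality, pick homogeneous $f$ with $(I:f)=\p$ and $\deg f=d:=\v_\p(I)$, and write $\overline f=\sum_i\overline{h_i}\,\overline{g_i}$ with $h_i$ homogeneous of degree $d-\deg g_i$, so only indices with $\deg g_i\le d$ occur. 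If every such $g_i$ had $(I:g_i)\ne\p$, then, since $\p\subsetneq(I:g_i)$ for each of them, I could choose $s_i\in(I:g_i)\setminus\p$ and set $s=\prod s_i\notin\p$; then $sg_i\in I$ for all occurring $i$, hence $sf=\sum h_i(sg_i)\in I$, i.e.\ $s\in(I:f)=\p$, a contradiction. So some $g_i$ with $\deg g_i\le d$ satisfies $(I:g_i)=\p$. The same argument, applied to any $f$ with $(I:f)=\p$ provided by $\p\in\Ass(I)$, shows the index set $\{i:(I:g_i)=\p\}$ is nonempty, so the minimum makes sense.

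Part \textbf{(b)} is then formal: for each $\p\in\Ass(I)$ there is $f$ of degree $\v_\p(I)$ with $(I:f)=\p\in\Ass(I)$, giving $\v(I)\le\min_\p\v_\p(I)$; conversely, any $f$ of degree $\v(I)$ with $(I:f)=\q\in\Ass(I)$ gives $\v_\q(I)\le\v(I)$. For \textbf{(c)}, a homogeneous minimal generating set of $N$ contains a generator of degree $\alpha(N)$ (look at $N/\m N$ in its bottom degree), so $\min_i\deg g_i=\alpha(N)$ and hence $\v_\p(I)=\min\{\deg g_i:(I:g_i)=\p\}\ge\alpha(N)$ by (a). For the equality when $\p\in\Max(I)$, I would take a nonzero homogeneous $\overline g\in N_{\alpha(N)}$ and analyze $(I:g)$: the isomorphism $S/(I:g)\cong(gS+I)/I\hookrightarrow S/I$ gives $\Ass(S/(I:g))\subseteq\Ass(I)$, while $(I:g)\supseteq\p$ forces every prime of $\Ass(S/(I:g))$ to contain $\p$; maximality of $\p$ inside $\Ass(I)$ leaves only $\p$, so $(I:g)$ is $\p$-primary and contains $\p$, whence $(I:g)=\p$ and $\v_\p(I)\le\alpha(N)$. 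Finally \textbf{(d)}: if $I$ has no embedded primes then $\Ass(I)=\Min(I)$ is an antichain, so $\Ass(I)=\Max(I)$; combining (b) with the equality in (c) yields $\v(I)=\min\{\alpha((I:\p)/I):\p\in\Ass(I)\}$.

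The step I expect to be the main obstacle is the equality clause of (c): one must rule out that $(I:g)$ is a strictly larger $\p$-primary ideal, and the clean way is precisely the containment $\Ass(S/(I:g))\subseteq\Ass(S/I)$ together with the elementary fact that a $\p$-primary ideal containing the prime $\p$ equals $\p$. Everything else is bookkeeping with graded components, Nakayama's lemma, and the definitions of $\v_\p(I)$ and $\v(I)$.
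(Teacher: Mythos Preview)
The paper does not prove this theorem itself; it is quoted verbatim from \cite{GRV21} (Theorem 3.2 there) as a background tool, so there is no proof in the paper to compare against.

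Your argument is correct. The key device in (a)---writing $\overline f=\sum_i\overline{h_i}\,\overline{g_i}$ and multiplying by $s=\prod_i s_i\notin\p$ with $s_i\in(I:g_i)\setminus\p$ to force $s\in(I:f)=\p$---is the natural one, and it simultaneously handles nonemptiness of $\{i:(I:g_i)=\p\}$ as you note. Your proof of the equality clause in (c) is also sound: the embedding $S/(I:g)\hookrightarrow S/I$ gives $\Ass(S/(I:g))\subseteq\Ass(I)$, maximality of $\p$ pins $\Ass(S/(I:g))=\{\p\}$, and then a $\p$-primary ideal $Q$ with $\p\subseteq Q$ satisfies $Q\subseteq\sqrt Q=\p\subseteq Q$, so $Q=\p$. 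Parts (b) and (d) are indeed formal consequences.

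One tiny wording issue: when you say ``$\overline f\in N$ already forces $\p\subseteq(I:f)\subsetneq S$'', the strict containment requires $\overline f\ne 0$, which is implicit since you are characterizing $(I:f)=\p$; it would be cleaner to state the dictionary as ``$(I:f)=\p$ iff $0\ne\overline f\in N$ and $\operatorname{Ann}_S(\overline f)\subseteq\p$''. This is cosmetic and does not affect the validity of the proof.
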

	
	In this paper, we study the $\v$-function of monomial ideals with linear powers.
	
	The paper is structured as follows. In Section \ref{sec:F-V-Num2}, we show in Theorem \ref{Thm:v(I^k)-Equigen} that for an equigenerated graded ideal $I\subset S$, we have $\v(I^k)=\alpha(I)k+b$ for some integer $b$, and all $k\gg0$. In general, the integer $b$ depends on $\textup{char}(K)$, as we show in Example \ref{Ex:v(I^k)dependsOnChar(K)}. However, if $I$ is a monomial ideal, then $b$ does not depend on $\textup{char}(K)$ (Proposition \ref{Prop:v(I^k)MonNotDependOnChar}) and moreover $b\ge-1$ (Theorem \ref{Thm:v(I^k)-Equigen}(c)). Indeed, the lower bound $\alpha(I)k-1\le\v(I^k)$ (Proposition \ref{Prop:Vnum>alpha}) always holds. In general, we also have the lower bound $\alpha(I)k\le\reg(I^k)$ for the Castelnuovo--Mumford regularity. If $I$ has linear powers, then this lower bound is achieved for all $k$. In view of this fact, it is natural to expect that $\v(I^k)=\alpha(I)k-1$ for all $k\ge1$, when $I$ is a monomial ideal with linear powers (Conjecture \ref{Conj:v-num-lin-powers}). Next, we investigate this conjecture.
	
	A particular class of monomial ideals with linear powers is that of equigenerated monomial ideals whose all powers have linear quotients. Linear quotients is the algebraic counterpart of the concept of shellability in simplicial complexes theory. A famous conjecture of Simon \cite[Conjecture 4.2.1]{Simon94} can be expressed, in algebraic terms, by saying that any squarefree monomial ideal $I\subset S$ generated in a single degree $d$ and having linear quotients, can be extended to the squarefree Veronese ideal $I_{n,d}$ by linear quotients (Conjecture \ref{Conj:Simon}). We extend this conjecture to all monomial ideals (Conjecture \ref{Conj:MonomialSimon}). Surprisingly, by using polarization, it turns out that the conjectures are equivalent (Proposition \ref{Prop:ConseqSimon}). For another instance of such an use of polarization, see \cite[Proposition 1.8]{EreyFicarra2023}. As a consequence, if Simon conjecture holds, then for any monomial ideal generated in a single degree and whose all powers have linear quotients, either $\v(I^k)=\alpha(I)k-1$ or $\v(I^k)=\alpha(I)k$, for all $k\gg0$.
	
	The behaviour of the $\v$-number of monomial ideals under polarization was studied in \cite{SS20}. The authors showed that $\v(I^\wp)\le\v(I)$ and equality holds in some special cases. In Theorem \ref{Theorem:v(I)v(I^wp)}, surprisingly we show that the equality $\v(I)=\v(I^\wp)$ holds for any monomial ideal $I$.
	
	Finally, we prove that Conjecture \ref{Conj:v-num-lin-powers} holds: when $\depth S/I=0$ (Theorem \ref{Thm:VNumDepth=0}), for edge ideals with linear resolution (Theorem \ref{Thm:vI(G)linearRes}), for polymatroidal ideals (Theorem \ref{Thm:vNumPolym}) and for Hibi ideals (Theorem \ref{Thm:vNumHibi}).

	\section{The $\v$-function of monomial ideals}\label{sec:F-V-Num2}
	
	In this section, we summarize some basic facts about the $\v$-function of monomial ideals. Hereafter, for a positive integer $n$, we set $[n]=\{1,2,\dots,n\}$. If $I\subset S$ is a monomial ideal, then each associated prime $\p\in\Ass(I)$ is a \textit{monomial prime ideal}. That is, $\p=\p_A=(x_i:i\in A)$ for some nonempty subset $A$ of $[n]$.
	
	The following result \cite[Proposition 3.11]{SS20}, due to Saha and Sengupta, is a special case of \cite[Proposition 4.3]{FS2}. It provides an useful general method to bound $\v(I)$ from above, when $I$ is a monomial ideal.
	\begin{Proposition}\label{Prop:SahaSengupta}
		Let $I\subset S$ be a monomial ideal and $f\in S\setminus I$ a monomial. Then,
		$$
		\v(I)\le\v(I:f)+\deg(f).
		$$
	\end{Proposition}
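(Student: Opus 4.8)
The plan is to chase the definition of the $\v$-number through the colon operation. Write $J=(I:f)$. Since $f\in S\setminus I$, the ideal $J$ is proper, so $\Ass(J)\ne\emptyset$ and $\v(J)=\v(I:f)$ is defined. By the definition of the $\v$-number (or by Theorem~\ref{Thm:GRVtheorem}(a),(b)), there is a homogeneous element $h\in S$ with $\deg(h)=\v(I:f)$ and $(J:h)=\q$ for some $\q\in\Ass(J)$. Because $I$ and $f$ are monomial one could even take $h$ to be a monomial, but this will not be needed.

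The first ingredient is the elementary colon identity $(J:h)=((I:f):h)=(I:fh)$, valid in any commutative ring. Thus $(I:fh)=\q$; in particular $fh\notin I$ since $\q$ is proper, and $fh$ is a homogeneous element of degree $\deg(f)+\deg(h)=\deg(f)+\v(I:f)$, nonzero because $S$ is a domain. The second ingredient is the containment $\Ass(I:f)\subseteq\Ass(I)$: multiplication by $f$ induces an $S$-module isomorphism $S/(I:f)\cong(fS+I)/I$ onto a submodule of $S/I$, and associated primes of a submodule are associated primes of the ambient module, so $\Ass(S/(I:f))\subseteq\Ass(S/I)$. Hence $\q\in\Ass(I:f)\subseteq\Ass(I)$.

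Putting these together, $fh$ is a homogeneous element with $(I:fh)=\q\in\Ass(I)$, so by the very definition of $\v(I)$ we obtain $\v(I)\le\deg(fh)=\deg(f)+\v(I:f)$, which is the assertion.

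There is no genuine obstacle here: the argument rests only on the collapse $(I:f):h=I:fh$ and on transferring associated primes along the embedding $S/(I:f)\hookrightarrow S/I$. The one small point to keep track of is that the witness $h$ realizing $\v(I:f)$ is a priori only homogeneous, not a monomial; but since the definition of the $\v$-number permits arbitrary homogeneous witnesses this is harmless. In fact nothing above uses that $I$ is monomial, which is exactly why the statement is a special case of \cite[Proposition 4.3]{FS2}; monomiality enters only through Theorem~\ref{Thm:GRVtheorem}(a), which additionally lets one take $h$, and hence the witness $fh$ for $\v(I)$, to be a monomial.
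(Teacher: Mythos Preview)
Your proof is correct. The paper does not give its own proof of this proposition; it simply records the result as \cite[Proposition 3.11]{SS20} and notes that it is a special case of \cite[Proposition 4.3]{FS2}. Your direct argument---using the colon identity $((I:f):h)=(I:fh)$ to propagate the witness, and the embedding $S/(I:f)\hookrightarrow S/I$ (via multiplication by $f$) to obtain $\Ass(I:f)\subseteq\Ass(I)$---is exactly the standard proof one finds in those references, and, as you correctly point out, it requires no monomiality hypothesis whatsoever, which is why the general graded version in \cite{FS2} holds.
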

	
	On the other hand, one always has
	\begin{Proposition}\label{Prop:Vnum>alpha}
		Let $I\subset S$ be a monomial ideal. Then,
		$$
		\v_\p(I)\ge\alpha(I)-1,\ \ \textit{for all}\ \ \p\in\Ass(I).
		$$
		In particular, $\v(I^k)\ge\alpha(I)k-1$, for all $k\ge1$.
	\end{Proposition}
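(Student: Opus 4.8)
The plan is to establish the pointwise inequality $\v_\p(I)\ge\alpha(I)-1$ first, and then deduce the statement about powers from the multiplicativity of the initial degree. The first step would be to use Theorem~\ref{Thm:GRVtheorem}(a) to reduce the computation of $\v_\p(I)$ to monomials. Since $I$ is a monomial ideal and $\p=\p_A$ is a monomial prime, the colon ideal $(I:\p)$ is again a monomial ideal, so $(I:\p)/I$ admits a homogeneous minimal generating set $\{\overline{g_1},\dots,\overline{g_r}\}$ in which every $g_i$ is a monomial, and $\v_\p(I)=\min\{\deg(g_i):(I:g_i)=\p\}$. Hence it is enough to show that every monomial $g$ with $(I:g)=\p_A$ satisfies $\deg(g)\ge\alpha(I)-1$.

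For the key step, fix such a monomial $g$ and choose any $i\in A$; this is possible because $A\neq\emptyset$, as $I\neq(0)$. Since $x_i\in\p_A=(I:g)$, we have $x_ig\in I$, and because $I$ is a monomial ideal, the monomial $x_ig$ is divisible by some minimal monomial generator $u$ of $I$. Therefore $\alpha(I)\le\deg(u)\le\deg(x_ig)=\deg(g)+1$, that is, $\deg(g)\ge\alpha(I)-1$. Taking the minimum over all the relevant $g_i$ yields $\v_\p(I)\ge\alpha(I)-1$, and then Theorem~\ref{Thm:GRVtheorem}(b) gives $\v(I)=\min_{\p\in\Ass(I)}\v_\p(I)\ge\alpha(I)-1$.

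For the ``in particular'' part, I would apply what was just proved to $I^k$ in place of $I$, using that $\alpha(I^k)=k\,\alpha(I)$ for a monomial ideal $I$: the bound $\alpha(I^k)\le k\,\alpha(I)$ is immediate by taking the $k$-th power of a minimal generator of $I$ of least degree, while $\alpha(I^k)\ge k\,\alpha(I)$ follows from $I\subseteq\m^{\alpha(I)}$, whence $I^k\subseteq\m^{k\alpha(I)}$. Combining this with the pointwise bound applied to $I^k$ gives $\v(I^k)\ge\alpha(I^k)-1=\alpha(I)k-1$ for every $k\ge1$.

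There is no real obstacle here; the argument is entirely elementary. The only points that need a little care are the reduction in the first step to a \emph{monomial} element realizing $\v_\p(I)$ (so that the divisibility argument ``$x_ig\in I$ forces a minimal generator of $I$ to divide $x_ig$'' can be invoked), and the multiplicativity $\alpha(I^k)=k\,\alpha(I)$ used to transfer the pointwise bound to powers.
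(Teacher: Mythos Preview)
Your proof is correct and follows essentially the same approach as the paper: pick a monomial $u$ with $(I:u)=\p$ and $\deg(u)=\v_\p(I)$, then use $x_iu\in I$ for $i\in A$ to conclude $\deg(u)\ge\alpha(I)-1$. You are in fact a bit more careful than the paper, which simply asserts the existence of a monomial witness without invoking Theorem~\ref{Thm:GRVtheorem}(a), and you also spell out the identity $\alpha(I^k)=k\,\alpha(I)$ that the paper leaves implicit.
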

	\begin{proof}
		Let $\p\in\Ass(I)$ and let $u\in S$ be a monomial such that $(I:u)=\p$ and $\deg(u)=\v_\p(I)$. Then $\p=\p_A=(x_i:i\in A)$ for some $A\subseteq[n]$. Thus $x_iu\in I$ for all $i\in A$. In particular, $\deg(x_iu)\ge\alpha(I)$. Hence, $\deg(u)\ge\alpha(I)-1$, as desired.
	\end{proof}
	
	From the work of \cite{FS2}, the next result follows.
	\begin{Theorem}\label{Thm:v(I^k)-Equigen}
		Let $I\subset S$ be a graded ideal generated in a single degree.
		\begin{enumerate}
			\item[\textup{(a)}] For all $\p\in\Ass^\infty(I)$, there exists an integer $b_\p$ such that $\v_\p(I^k)=\alpha(I)k+b_\p$ is a linear function in $k$, for all $k\gg0$.
			\item[\textup{(b)}] $\v(I^k)=\alpha(I)k+b$ for all $k\gg0$, where $b=\min_{\p\in\Ass^\infty(I)}b_\p$.
			\item[\textup{(c)}] If $I$ is a monomial ideal, then $b_\p,b\ge-1$, for all $\p\in\Ass^\infty(I)$.
		\end{enumerate}
	\end{Theorem}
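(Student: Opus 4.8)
The plan is to establish part (a) — the substantial assertion, where the input from \cite{FS2} is used — and then to read off (b) and (c) quickly from Theorem \ref{Thm:GRVtheorem} and Proposition \ref{Prop:Vnum>alpha}. So set $d=\alpha(I)$; as $I$ is generated in degree $d$, the power $I^k$ is generated in degree $dk$ and $\alpha(I^k)=dk$. Fix $\p\in\Ass^{\infty}(I)$; then $\p\in\Ass(I^k)$ for all $k\gg0$, so $(I^k:\p)/I^k\ne0$ and $\v_\p(I^k)$ is defined for all large $k$. I would look at the bigraded module
$$
M\ =\ \bigoplus_{k\ge0}\big((I^k:\p)/I^k\big)t^k
$$
over the Rees algebra $\Rees(I)=\bigoplus_{k\ge0}I^kt^k\subseteq S[t]$: since $I_d(I^k:\p)\subseteq(I^{k+1}:\p)$, the colon module $\bigoplus_k(I^k:\p)t^k$ is an $\Rees(I)$-submodule of $S[t]$ containing $\Rees(I)$, and $M$ is its quotient by $\Rees(I)$. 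Writing $\p=(x_{i_1},\dots,x_{i_s})$ one has $(I^k:\p)=\bigcap_j(I^k:x_{i_j})$, and the finiteness arguments in \cite{FS2} give that each $\bigoplus_k(I^k:x_{i_j})t^k$ is a finitely generated $\Rees(I)$-module; being a submodule of one of these Noetherian modules (and $\Rees(I)$ is Noetherian), $\bigoplus_k(I^k:\p)t^k$ — and hence $M$ — is finitely generated over $\Rees(I)$. Since $\Rees(I)$ is generated over $S$ by the finitely many forms $I_dt$, all of internal degree $d$ and Rees-degree one, the initial degree $k\mapsto\alpha\big((I^k:\p)/I^k\big)$ is eventually linear of slope $d$. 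Finally, by Theorem \ref{Thm:GRVtheorem}(a) the number $\v_\p(I^k)$ equals the least degree of a minimal generator $\overline{g}$ of $(I^k:\p)/I^k$ with $(I^k:g)=\p$; this coincides with $\alpha\big((I^k:\p)/I^k\big)$ when $\p\in\Max(I^k)$ by Theorem \ref{Thm:GRVtheorem}(c), and for equigenerated $I$ the analysis of \cite{FS2} shows that even this restricted minimum is eventually linear of slope $d$. This yields the integer $b_\p$ with $\v_\p(I^k)=dk+b_\p$ for $k\gg0$.

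Parts (b) and (c) are then formal. For (b): $\Ass(I^k)=\Ass^{\infty}(I)$ for $k\gg0$ by \cite{B79}, and $\Ass^{\infty}(I)$ is finite, so picking $k$ large enough that (a) holds for all $\p\in\Ass^{\infty}(I)$ at once and applying Theorem \ref{Thm:GRVtheorem}(b) to $I^k$,
$$
\v(I^k)\ =\ \min_{\p\in\Ass(I^k)}\v_\p(I^k)\ =\ \min_{\p\in\Ass^{\infty}(I)}\big(dk+b_\p\big)\ =\ dk+b,\qquad b\ =\ \min_{\p\in\Ass^{\infty}(I)}b_\p .
$$
For (c): when $I$ is a monomial ideal, Proposition \ref{Prop:Vnum>alpha} applied to $I^k$ gives $\v_\p(I^k)\ge\alpha(I^k)-1=dk-1$ for every $\p\in\Ass(I^k)$ and every $k\ge1$; comparing with $\v_\p(I^k)=dk+b_\p$ for $k\gg0$ forces $b_\p\ge-1$, hence $b=\min_\p b_\p\ge-1$.

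The main obstacle lies entirely in (a): proving that $M$ is finitely generated over $\Rees(I)$ — equivalently, the asymptotic stability of the colon ideals $(I^k:\p)$ — and, for an embedded prime $\p$, controlling the degrees of the \emph{specific} generators $g$ with $(I^k:g)=\p$ rather than merely the initial degree of $(I^k:\p)/I^k$. These are precisely the points where one invokes \cite{FS2}, which is why the theorem is presented here as a consequence of that work.
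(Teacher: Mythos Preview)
Your proof is correct and follows essentially the same approach as the paper: the paper simply cites \cite[Theorem~3.1(b)]{FS2} for (a), then derives (b) by taking $\min_{\p\in\Ass^\infty(I)}$ and (c) from Proposition~\ref{Prop:Vnum>alpha}, exactly as you do. Your treatment of (a) is more expansive in that you sketch the Rees-algebra/finite-generation argument that presumably underlies \cite{FS2}, but you correctly identify that the delicate point for embedded primes (controlling the degrees of the specific generators $g$ with $(I^k:g)=\p$, not just $\alpha((I^k:\p)/I^k)$) still requires invoking \cite{FS2} directly---so the substance is the same.

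One small notational slip: in part (a) the ideal $I$ is an arbitrary graded ideal, so $\p\in\Ass^\infty(I)$ need not be a monomial prime; writing $\p=(x_{i_1},\dots,x_{i_s})$ is therefore inappropriate in that generality. The argument is unaffected---replace the variables by any finite set of homogeneous generators $f_1,\dots,f_s$ of $\p$, and $(I^k:\p)=\bigcap_j(I^k:f_j)$ still holds---but the notation should be adjusted.
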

	\begin{proof}
		The proof of \cite[Theorem 3.1(b)]{FS2} shows that $\v_\p(I^k)=\alpha(I)k+b_\p$, for all $k\gg0$ and all $\p\in\Ass^\infty(I)$, for some $b_\p\in\ZZ$. Setting $b=\min\limits_{\p\in\Ass^{\infty}(I)}b_\p$, then
		$$
		\v(I^k)\ =\ \min_{\p\in\Ass^{\infty}(I)}\v_\p(I^k)\ =\ \min_{\p\in\Ass^{\infty}(I)}(\alpha(I)k+b_\p)\ =\ \alpha(I)k+b,
		$$
		for all $k\gg0$, and (a) and (b) follow. Part (c) follows from Proposition \ref{Prop:Vnum>alpha}.
	\end{proof}
	
	Recall that a graded ideal $I\subset S$ has a \textit{$d$-linear resolution} if it is generated in a single degree $d$, and for all $i\ge0$, $\beta_{i,j}(I)=0$ if $j\ne i+d$. Equivalently, an equigenerated ideal $I$ has linear resolution, if and only if, the Castelnuovo--Mumford regularity is as small as possible, namely $\reg(I)=\alpha(I)$. We say that $I$ has \textit{linear powers} if $I^k$ has a linear resolution, for all $k\ge1$. Famous examples of monomial ideals with linear powers are given in the following list.\smallskip
	\begin{enumerate}\label{list:MonLinPow}
		\item[(i)] Edge ideals with linear resolution \cite[Theorem 10.2.6]{JT}.
		\item[(ii)] Polymatroidal ideals \cite[Corollary 12.6.4]{JT}.
		\item[(iii)] Hibi ideals \cite[Corollary 10.2.9 and Theorem 9.1.13]{JT}, or \cite[Corollary 4.11]{CF2023}.
	\end{enumerate}\medskip
	
	In general, the $\v$-function of a graded ideal $I$ depends on the characteristic of $K$, even when $I$ has linear powers.
	\begin{Example}\label{Ex:v(I^k)dependsOnChar(K)}
		\rm Indeed, consider the principal ideal $I=(x^2+y^2)$ of $S=\QQ[x,y]$. Then $I^k$ is principal as well, and thus $I$ has linear powers. Since $x^2+y^2$ is an irreducible polynomial over $\QQ[x,y]$, $I$ is a prime ideal, and thus $I^k$ is a $I$-primary ideal and $\Ass(I^k)=\{I\}$ for all $k$. Note that
		$$
		(I^k:I)/I^k\ =\ (((x^2+y^2)^k):(x^2+y^2))/I^k\ =\ ((x^2+y^2)^{k-1})/I^k
		$$
		for all $k\ge1$. Hence, $\v(I^k)=\v_I(I^k)=2k-2$ for all $k\ge1$.
		
		On the other hand, if $I=(x^2+y^2)$ is an ideal of $\CC[x,y]$, then $(x^2+y^2)^k=(x+iy)^k(x-iy)^k$ for all $k\ge1$. Setting $\p_1=(x+iy)$ and $\p_2=(x-iy)$, then $I^k=\p_1^k\p_2^k=\p_1^k\cap\p_2^k$ is the primary decomposition of $I^k$ and $\Ass(I^k)=\{\p_1,\p_2\}$ for all $k\ge1$. Note that
		$$
		(I^k:\p_1)/I^k\ =\ (((x^2+y^2)^k):(x+iy))/I^k\ =\ ((x+iy)^{k-1}(x-iy)^k)/I^k
		$$
		for all $k\ge1$. Hence $\v_{\p_1}(I^k)=2k-1$ for all $k\ge1$. By symmetry, we also have $\v_{\p_2}(I^k)=2k-1$ for all $k\ge1$. Thus $\v(I^k)=2k-1$ for all $k\ge1$.
	\end{Example}\medskip
	
	However, the primary decomposition of a monomial ideal $I\subset S$ does not depend on the characteristic, $\textup{char}(K)$, of the field $K$ \cite[Theorem 1.3]{JT}. Next, since each $\p\in\Ass^\infty(I)$ is a monomial prime ideal, it follows from \cite[Proposition 1.2.2]{JT} that the monomial $K$-basis of $(I^k:\p)/I^k$ is independent from $\text{char}(K)$. This observations show that
	\begin{Proposition}\label{Prop:v(I^k)MonNotDependOnChar}
		Let $I\subset S$ be a monomial ideal. Then, the functions $\v_\p(I^k)$ for each $\p\in\Ass^\infty(I)$ and all $k\gg0$, and the function $\v(I^k)$ for $k\ge1$, do not depend upon $\textup{char}(K)$.
	\end{Proposition}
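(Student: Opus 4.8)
The plan is to reduce the statement to the Grisalde--Reyes--Villarreal criterion (Theorem \ref{Thm:GRVtheorem}) and then to observe that, for a monomial ideal, every ingredient entering that criterion is of a purely combinatorial nature and hence cannot depend on $\textup{char}(K)$.

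First I would note that each power $I^k$ is again a monomial ideal, whose minimal monomial generators are obtained from those of $I$ by forming products and discarding the non-minimal ones --- an operation that does not involve the field. By \cite[Theorem 1.3]{JT}, the irredundant primary decomposition of $I^k$ is the same over every $K$; consequently $\Ass(I^k)$, and therefore $\Ass^{\infty}(I)$, consists of monomial primes $\p_A=(x_i:i\in A)$ and is independent of $\textup{char}(K)$. Next, fixing $\p=\p_A\in\Ass(I^k)$ (with $k\gg0$ when one works with $\Ass^{\infty}(I)$), I would use that the colon ideal $(I^k:\p)=\bigcap_{i\in A}(I^k:x_i)$ is again monomial, and that both it and the quotient $(I^k:\p)/I^k$ admit $K$-bases formed by residue classes of monomials; by \cite[Proposition 1.2.2]{JT} such a monomial $K$-basis of the quotient is the same for all fields. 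In particular, a minimal homogeneous generating set $\{\overline{g_1},\dots,\overline{g_r}\}$ of $(I^k:\p)/I^k$ may be chosen with all $g_j$ monomials, and the condition $(I^k:g_j)=\p$ --- which requires $x_ig_j\in I^k$ for $i\in A$ and $x_\ell g_j\notin I^k$ for $\ell\notin A$ --- is a statement about exponent vectors alone, hence insensitive to $K$.

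With these facts in hand I would conclude by invoking Theorem \ref{Thm:GRVtheorem}(a), which gives
$$
\v_\p(I^k)\ =\ \min\{\deg(g_j)\ :\ 1\le j\le r\ \textit{and}\ (I^k:g_j)=\p\};
$$
every quantity on the right is independent of $\textup{char}(K)$, which proves the claim for $\v_\p(I^k)$, $k\gg0$. Finally, Theorem \ref{Thm:GRVtheorem}(b) yields $\v(I^k)=\min_{\p\in\Ass(I^k)}\v_\p(I^k)$ for every $k\ge1$, so $\v(I^k)$ is char-independent as well. The step I expect to require the most care is the second one: one must check that a \emph{minimal} generating set of the module $(I^k:\p)/I^k$ can actually be realized by monomials, and that the colon condition $(I^k:g_j)=\p$ is detected purely on exponent vectors. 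Both are standard properties of monomial ideals, essentially contained in \cite[Proposition 1.2.2]{JT}, so I do not anticipate a genuine obstruction.
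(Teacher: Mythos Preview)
Your proposal is correct and follows essentially the same route as the paper: both rely on \cite[Theorem 1.3]{JT} to make $\Ass(I^k)$ characteristic-free and on \cite[Proposition 1.2.2]{JT} to ensure the monomial $K$-basis of $(I^k:\p)/I^k$ is characteristic-free. You spell out more explicitly the final step through Theorem \ref{Thm:GRVtheorem}(a)--(b) and the combinatorial nature of the condition $(I^k:g_j)=\p$, whereas the paper leaves these implicit in the sentence preceding the proposition.
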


	Now, suppose that $I\subset S$ is a monomial ideal with linear powers. Then, the regularity of $I^k$ attains its minimum value possible, namely $\alpha(I^k)=k\alpha(I)$. Hence, $\reg(I^k)=\alpha(I)k$, for all $k\ge1$. Thus it is natural to expect that the $\v$-number of $I^k$ attains its minimal possible value for all $k$, namely $\v(I^k)=\alpha(I)k-1$ for all $k\ge1$.
	\begin{Conjecture}\label{Conj:v-num-lin-powers}
		Let $I\subset S$ be a monomial ideal with linear powers. Then
		$$
		\v(I^k)\ =\ \alpha(I)k-1, \ \ \text{for all}\ k\ge1.
		$$
	\end{Conjecture}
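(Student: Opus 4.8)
By Proposition~\ref{Prop:Vnum>alpha} the inequality $\v(I^k)\ge\alpha(I)k-1$ holds for every $k\ge1$, so the content of Conjecture~\ref{Conj:v-num-lin-powers} is the reverse bound $\v(I^k)\le\alpha(I)k-1$. By Theorem~\ref{Thm:GRVtheorem}(b) this amounts to exhibiting, for each $k$, an associated prime $\p\in\Ass(I^k)$ and a monomial $f$ of degree $\alpha(I)k-1$ with $(I^k:f)=\p$; since $\reg I^k=\alpha(I)k$ when $I$ has linear powers, the assertion is precisely that the $\v$-number of $I^k$ is as small as its Castelnuovo--Mumford regularity permits, namely $\v(I^k)=\reg(S/I^k)$.

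The first move I would make is to strip the powers from the problem. If $I$ has linear powers, then $I^k$ has a linear resolution for every $k$, and polarization preserves graded Betti numbers, so $(I^k)^\wp$ is a squarefree monomial ideal, equigenerated in degree $\alpha(I)k$, which still has a linear resolution. Since $\v(I^k)=\v((I^k)^\wp)$ by Theorem~\ref{Theorem:v(I)v(I^wp)}, the whole conjecture would follow from
\[
(\ast)\qquad\text{every equigenerated squarefree monomial ideal with linear resolution has }\v=\alpha-1,
\]
and by Proposition~\ref{Prop:Vnum>alpha} again $(\ast)$ is the same as finding, for such an ideal $J$, a squarefree monomial $f$ of degree $\alpha(J)-1$ with $(J:f)$ a minimal prime of $J$.

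To attack $(\ast)$ I would look for a shelling-type witness, modelled on the edge-ideal case: if $J=I(G)$ has a linear resolution then $\bar G$ is chordal, hence has a simplicial vertex $v$ (Dirac), and one checks directly that $(I(G):x_v)=\p_{N_G(v)}$, where $V\setminus N_G[v]=N_{\bar G}(v)$ is a clique of $\bar G$, so that $N_G(v)$ is a minimal vertex cover and $\v(I(G))\le1=\alpha(I(G))-1$. For general $J$, linear resolution is equivalent (Eagon--Reiner) to Cohen--Macaulayness of the Stanley--Reisner ring of the Alexander dual of $J$; the plan would be to use a shedding vertex of the dual complex, or a linear quotients order on $J$ when one is available, to locate a generator $u$ of $J$ and a subset $C\subseteq\supp(u)$ of size $\alpha(J)-1$ so that a squarefree monomial of degree $\alpha(J)-1$ supported away from the last ``colon set'' of the quotients order has colon equal to a prime $\p_A$ with $A$ a minimal vertex cover. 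If for some power the reduction to $(\ast)$ is not directly available, the fallback is Proposition~\ref{Prop:SahaSengupta}: picking a monomial $g$ of degree $\alpha(I)(k-1)$ with $gI\subseteq I^k$ (for instance a product of $k-1$ minimal generators, possibly after passing to a stable prime and using Theorems~\ref{Thm:GRVtheorem}(c) and~\ref{Thm:v(I^k)-Equigen}) gives $(I^k:g)\supseteq I$ and $\v(I^k)\le\v(I^k:g)+\alpha(I)(k-1)$, so it would suffice to show that the colon still has $\v\le\alpha(I)-1$; moreover, by Theorem~\ref{Thm:v(I^k)-Equigen} and Proposition~\ref{Prop:v(I^k)MonNotDependOnChar} one may restrict to $k\gg0$, where only $b=-1$ has to be proved.

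The principal obstacle is $(\ast)$ itself. Having a linear resolution -- or even linear powers -- is a priori too weak to hand one the monomial witness: what one really wants is a linear quotients order, or a comparable decomposability of the associated simplicial complex, from which a colon equal to a prime of initial degree $\alpha-1$ can be read off, exactly as for the squarefree Veronese ideal, for $\depth S/I=0$ (where $x_1^{\alpha(I)k-1}$ works), for polymatroidal ideals (the exchange property), and for Hibi ideals (the distributive lattice). Thus the crux is to show that ``linear powers'' forces each $I^k$ to have linear quotients, or at least a $\v$-compatible shelling -- plausibly via Koszulness of the Rees algebra, or a generic initial ideal argument that controls $\v$ -- and this is where I expect the real difficulty to lie, which is why the conjecture stays open beyond the four cases settled here.
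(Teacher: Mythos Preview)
The statement is a conjecture that the paper does not prove in general; it is verified only for the four special cases in Theorems~\ref{Thm:VNumDepth=0}, \ref{Thm:vI(G)linearRes}, \ref{Thm:vNumPolym} and \ref{Thm:vNumHibi}. Your write-up correctly ends by acknowledging this, so there is no full proof on either side to compare.

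That said, your central reduction is flawed: $(\ast)$ is false. The paper itself supplies the counterexample immediately after stating the conjecture --- the Stanley--Reisner ideal of the minimal triangulation of the projective plane (in characteristic $\ne 2$) is squarefree, equigenerated in degree $3$, and has a linear resolution, yet $\v(I)=\alpha(I)=3$. This ideal does not have linear powers (its square lacks a linear resolution), so it does not contradict Conjecture~\ref{Conj:v-num-lin-powers}; but your polarization step only guarantees that $(I^k)^\wp$ has a linear \emph{resolution}, not that it has linear \emph{powers}, so you land exactly in the regime where $(\ast)$ can fail. In short, the reduction discards the ``linear powers'' hypothesis, which is precisely what separates the conjecture from a false statement. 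Any workable version of your strategy would have to identify and exploit some additional property that $(I^k)^\wp$ inherits from $I$ having linear powers.

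The paper's route to partial results does not pass through $(\ast)$: it uses $\depth S/I=0$ directly, or structure specific to the class at hand (Dirac's theorem for cochordal edge ideals, the exchange property for polymatroids, the identity $(H_P^k)^\wp=H_{P(k)}$ for Hibi ideals). For equigenerated ideals whose powers all have linear quotients, the paper ties the question to the monomial Simon conjecture and obtains only $b\in\{-1,0\}$ (Proposition~\ref{Prop:ConseqSimon} and its corollary), not $b=-1$; so even under the strongest shellability hypothesis available, the general conjecture is not settled.
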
\medskip
	
	If $I\subset S$ is a graded ideal with linear powers, but not a monomial ideal, then the above conjecture is false, as the ideal $(x^2+y^2)\subset\QQ[x,y]$ of Example \ref{Ex:v(I^k)dependsOnChar(K)} shows.
	
	Similarly, if $I$ is a monomial ideal, but does not have linear powers, Conjecture \ref{Conj:v-num-lin-powers} is no longer valid. Next example is due to Terai \cite[Remark 3]{C2000}. If $\textup{char}(K)\ne 2$, the Stanley Reisner ideal $I = (abd, abf, ace, adc, aef, bde, bcf, bce, cdf, def)$ of the minimal triangulation of the projective plane has a linear resolution, but $I^2$ has not. By using \textit{Macaulay2} \cite{GDS}, we have $\v(I)=\alpha(I)=3$ and $\v(I^k)=3k-1$ for all $k\ge2$.
	
	\section{The monomial Simon conjecture}\label{sec:F-V-Num3}
	
	Let $I\subset S$ be a monomial ideal and $G(I)$ be the minimal monomial generating set of $I$. We say that $I$ has \textit{linear quotients} if there exists an order $u_1,\dots,u_m$ of $G(I)$ such that $(u_1,\dots,u_{j-1}):u_j$ is generated by variables, for $j=2,\dots,m$.
	
	The monomial ideal of $S$ generated by all squarefree monomial ideals of degree $1\le d\le n$ is called the \textit{Veronese ideal} of degree $d$ of $S$, and it is denoted by $I_{n,d}$. For instance, $I_{4,3}=(x_1x_2x_3,x_1x_2x_4,x_1x_3x_4,x_2x_3x_4)$.
	
	Let $I\subset J$ be monomial ideals with $G(I)\subset G(J)$. We say that $I$ can be \textit{extended to $J$ by linear quotients} if the set $G(J)\setminus G(I)$ can be ordered as $v_1,\dots,v_r$ such that $(I,v_1,\dots,v_{j-1}):v_j$ is generated by variables, for all $j=2,\dots,r$. 
	
	The famous Simon conjecture \cite[Conjecture 4.2.1]{Simon94} states that the skeleton of any simplex is \textit{extendably shellable}, see \cite{BYPZN2019} and the references therein for more details on this topic. In algebraic terms, the conjecture can be equivalently stated as follows.
	\begin{Conjecture}\label{Conj:Simon}
		Let $I\subset S$ be a squarefree monomial ideal with linear quotients such that $G(I)\subset G(I_{n,d})$. Then $I$ can be extended to $I_{n,d}$ by linear quotients.
	\end{Conjecture}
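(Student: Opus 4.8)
The plan is to pass to the combinatorial side through the standard dictionary between equigenerated squarefree monomial ideals with linear quotients and shellable pure simplicial complexes. To a squarefree monomial $u=\prod_{i\in F}x_i$ of degree $d$ one associates the $d$-subset $F\subseteq[n]$, and to $I$ with $G(I)\subset G(I_{n,d})$ the pure $(d-1)$-dimensional complex $\Delta$ whose facets are the supports of the elements of $G(I)$. Under this correspondence the condition that $(u_1,\dots,u_{j-1}):u_j$ be generated by variables translates, upon computing $x_{F_i}/\gcd(x_{F_i},x_{F_j})=x_{F_i\setminus F_j}$, into the requirement that for every $i<j$ the face $F_i\cap F_j$ be contained in some $F_k\cap F_j$ with $k<j$ and $|F_k\cap F_j|=d-1$; this is exactly the defining condition of a shelling of $\Delta$. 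Likewise, extending $I$ to $I_{n,d}$ by linear quotients is precisely extending a shelling of $\Delta$ to a shelling of the full $(d-1)$-skeleton $\Delta_{n,d}$ of the simplex on $[n]$. Since $\Delta_{n,d}$ is the independence complex of the uniform matroid $U_{d,n}$, Conjecture \ref{Conj:Simon} is the assertion that $U_{d,n}$ is \emph{extendably shellable}.

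With this reformulation, the natural attack is induction on $n$ via the splitting $I_{n,d}=I_{n-1,d}+x_nI_{n-1,d-1}$, that is, deletion and link at the vertex $x_n$: the part of a generator not divisible by $x_n$ records a face of the deletion $\Delta_{n-1,d}$, while the part divisible by $x_n$ records, after removing $x_n$, a face of the link $\lk_{\Delta_{n,d}}x_n=\Delta_{n-1,d-1}$. One would first extend the $x_n$-free generators to all of $I_{n-1,d}$ (an instance of the conjecture in $n-1$ variables and degree $d$), then append the remaining $x_n$-divisible facets in the order dictated by a shelling of $\Delta_{n-1,d-1}$, arranging that each newly placed $x_n$-divisible facet $G$ acquires a linear-quotient predecessor: either a facet $(G\setminus\{n\})\cup\{p\}$ with $p\in[n-1]$, which already lies in the fully shelled deletion, or an earlier $x_n$-divisible facet differing from $G$ in one vertex of $[n-1]$. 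The base cases give the induction a foothold: $d=1$ is immediate, and the top skeleton $d=n-1$ (the boundary of the simplex) is extendably shellable by a direct lexicographic argument.

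The delicate point is the seam between the two pieces. The given subcomplex $\Delta$ already carries arbitrary $x_n$-divisible facets, so when one shells out the deletion to $I_{n-1,d}$ the colon of a new $x_n$-free facet $H$ against an already present generator $x_F$ with $n\in F$ is $x_{F\setminus H}$, a monomial divisible by $x_n$; forcing this colon to be generated by variables requires that at the moment $H$ is added either some $d$-subset of $H\cup\{n\}$ already belongs to $I$, or a suitable $x_n$-free completion has been placed earlier. Verifying that the two linear-quotient orders can be interleaved so that this ``generated by variables'' condition never fails at the interface is routine when the two pieces are combinatorially disjoint but genuinely constrained at the overlap, and an arbitrary shellable $\Delta$ need not respect the $x_n$-splitting at all.

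The hard part will be exactly this greedy extension step, and I do not expect a naive induction to close it: extendable shellability does not in general descend along the deletion/link recursion, and it is known that not every matroid complex is extendably shellable, so any successful argument must exploit the full symmetry of $\Delta_{n,d}$ rather than its matroidal structure alone. A promising refinement is to track the \emph{exposed} $(d-1)$-faces, those lying in exactly one already-shelled facet, and to prove by a counting or parity invariant on the partial complex that as long as the shelled part is a proper subcomplex of $\Delta_{n,d}$ some exposed face admits a completion to an unused $d$-subset sharing a codimension-one face with it. Establishing the persistence of such a completion at every stage is precisely where the essential difficulty—and the reason the conjecture has resisted proof—resides.
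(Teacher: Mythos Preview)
The statement you are attempting to prove is labeled \textbf{Conjecture} in the paper, and for good reason: it is Simon's conjecture from 1994, which remains open. The paper contains no proof of it. What the paper actually does with this statement is (i) reformulate it algebraically as Conjecture~\ref{Conj:Simon}, (ii) show via polarization that it is equivalent to the ostensibly stronger monomial version (Proposition~\ref{Prop:SimonVsSimon}), and (iii) derive conditional consequences for the $\v$-number under the hypothesis that the conjecture holds (Proposition~\ref{Prop:ConseqSimon} and its corollary). At no point does the paper claim to establish the conjecture itself.

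Your proposal is therefore not being compared against any proof in the paper---there is none. To your credit, you recognize this: your final two paragraphs explicitly identify the seam in the deletion/link induction as the obstruction, note that extendable shellability does not descend along this recursion, and concede that ``the conjecture has resisted proof.'' That is an accurate assessment. Your dictionary between linear quotients and shellings is correct, and the reformulation as extendable shellability of the uniform matroid $U_{d,n}$ is standard and matches how the paper (and the literature it cites) frames the problem. But the inductive scheme you outline is known not to close, for exactly the reason you give: a shellable subcomplex of $\Delta_{n,d}$ need not respect the vertex-split, and controlling the interface between the $x_n$-free and $x_n$-divisible parts is where every naive attack stalls. So what you have written is a fair survey of the difficulty, not a proof---and no proof was expected, since none exists.
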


    It is natural to extend the above conjecture to all monomial ideals.
    \begin{Conjecture}\label{Conj:MonomialSimon}
    	Let $I\subset S$ be a monomial ideal with linear quotients such that $G(I)\subset G(\mathfrak{m}^d)$. Then $I$ can be extended to $\mathfrak{m}^d$ by linear quotients.
    \end{Conjecture}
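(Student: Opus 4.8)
The plan is to deduce Conjecture \ref{Conj:MonomialSimon} from the squarefree Simon conjecture (Conjecture \ref{Conj:Simon}), which we are entitled to assume since it is stated above, using polarization as the bridge between the two equigenerated settings; in the spirit of the polarization argument of \cite[Proposition 1.8]{EreyFicarra2023}. Write $T$ for the polynomial ring on the polarization variables $x_{i,j}$ with $1\le i\le n$ and $1\le j\le d$, and for an equigenerated monomial ideal $L\subset S$ generated in degree $d$ let $L^\wp\subset T$ denote its polarization. I will use repeatedly the following combinatorial reformulation of one extension step: for equigenerated ideals $I'\subset\mathfrak{m}^d$ and $v\in G(\mathfrak{m}^d)\setminus G(I')$, the colon $(I':v)$ is generated by variables if and only if for every $u\in G(I')$ there is an index $k$ with $\deg_{x_k}(u)>\deg_{x_k}(v)$ and $vx_k/x_l\in G(I')$ for some $l$ with $x_l\mid v$.

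First I would establish a transfer lemma: for equigenerated monomial ideals $I\subset J$ with $G(I)\subset G(J)$, the ideal $I$ extends to $J$ by linear quotients if and only if $I^\wp$ extends to $J^\wp$ by linear quotients, and an order $v_1,\dots,v_r$ of $G(J)\setminus G(I)$ works exactly when the corresponding order $v_1^\wp,\dots,v_r^\wp$ works for the polarizations. The content is that the colon $(I,v_1,\dots,v_{j-1}):v_j$ is generated by variables precisely when $(I,v_1,\dots,v_{j-1})^\wp:v_j^\wp$ is; this rests on the compatibility of polarization with the relevant colon operations and on the fact that both polarization and depolarization preserve the property of being generated by variables. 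Applying the lemma with $J=\mathfrak{m}^d$ reduces the conjecture to the purely squarefree statement that $I^\wp$, a squarefree ideal with linear quotients satisfying $G(I^\wp)\subset G((\mathfrak{m}^d)^\wp)$, extends to $(\mathfrak{m}^d)^\wp$ by linear quotients.

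The second step is to connect the squarefree target $(\mathfrak{m}^d)^\wp$ with the Veronese ideal $I_{N,d}$ of $T$, where $N=nd$, which is the target appearing in Simon's conjecture. Here I would identify $G((\mathfrak{m}^d)^\wp)$ with the set of \emph{compressed} squarefree degree-$d$ monomials, namely those whose support, restricted to each block $\{x_{i,1},\dots,x_{i,d}\}$, is an initial segment $\{x_{i,1},\dots,x_{i,a_i}\}$. The plan is to feed $I^\wp\subset I_{N,d}$ into Simon's conjecture, obtaining a linear-quotients extension of $I^\wp$ up to the full Veronese, and then to \emph{rearrange} that shelling so that every compressed generator is adjoined before any non-compressed one while preserving linear quotients at each stage. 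Such a rearrangement would display $(\mathfrak{m}^d)^\wp$ as an initial segment of an extendable shelling of $I_{N,d}$, yielding exactly the extension of $I^\wp$ to $(\mathfrak{m}^d)^\wp$ required by the first step.

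I expect the rearrangement of the second step to be the main obstacle. Concretely, one must show that blockwise compression is compatible with extendable shellability: that at each stage a valid next generator can always be chosen among the compressed monomials, verified through the criterion recalled above using the exchange property of the polymatroid underlying $\mathfrak{m}^d$. This is precisely the point where the monomial conjecture meets the genuine difficulty of Simon's conjecture, and it is why the statement is phrased as a conjecture; accordingly, the argument above should be read as reducing Conjecture \ref{Conj:MonomialSimon} to, and exhibiting it as governed by, the extendable shellability of the skeletons of the simplex.
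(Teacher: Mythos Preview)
Your polarization framework and your transfer lemma are sound; the equivalence of linear-quotients extendability for $I\subset J$ and for $I^\wp\subset J^\wp$ does follow from the compatibility of polarization with colon ideals. But the gap is exactly where you place it, and it is a real one: Simon's conjecture only produces \emph{some} shelling of $I_{N,d}$ extending $I^\wp$, with no control over whether the compressed generators can be arranged to precede the non-compressed ones. Your proposed rearrangement is not a consequence of Simon's conjecture, and you supply no argument for it. So your outline does not reduce the monomial conjecture to Simon's; it reduces it to Simon's conjecture \emph{plus} an unproved rearrangement lemma, which is strictly weaker than what the paper establishes.

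The paper's proof (Proposition~\ref{Prop:SimonVsSimon}) bypasses this obstacle with a simpler idea you are missing: rather than trying to halt the extension at the intermediate target $(\mathfrak{m}^d)^\wp$, one applies the specialization map $\pi:S^\wp\to S$, $x_{i,j}\mapsto x_i$, to the \emph{entire} linear-quotients order $u_1^\wp,\dots,u_m^\wp,v_1,\dots,v_r$ of the full Veronese furnished by Simon. Because $\pi$ sends variables to variables, each colon $(u_1,\dots,u_m,\pi(v_1),\dots,\pi(v_{j-1})):\pi(v_j)$ is again generated by variables, or is the unit ideal when $\pi(v_j)$ coincides with an earlier term. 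Since every monomial of $\mathfrak{m}^d$ lies in the image of $\pi$, deleting repeated entries yields a linear-quotients order of $\mathfrak{m}^d$ beginning with $u_1,\dots,u_m$. The non-compressed squarefree monomials are thus not pushed to the end of the shelling; they are simply collapsed by $\pi$ onto degree-$d$ monomials already present. This single observation converts your outline into a complete reduction and makes the rearrangement step unnecessary.
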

    
    We refer to the above conjecture as the \textit{monomial Simon conjecture}. Since the squarefree part of an ideal with linear quotients has linear quotients \cite[Lemma 3.1]{EHHM2022b}, this latter conjecture implies the usual Simon conjecture. Surprisingly, however, it turns out that the two statements are equivalent, as we show next.
    
	\begin{Proposition}\label{Prop:SimonVsSimon}
		Simon conjecture is equivalent to the monomial Simon conjecture.
	\end{Proposition}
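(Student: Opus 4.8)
The plan is to prove the two implications separately; the substance of the proposition is the direction ``Simon conjecture $\Rightarrow$ monomial Simon conjecture''. The reverse implication was already indicated before the statement: if $I\subset S$ is squarefree with linear quotients and $G(I)\subseteq G(I_{n,d})$, then the monomial Simon conjecture extends $I$ to $\m^d$ by linear quotients, and the subsequence of the resulting ordering of $G(\m^d)\setminus G(I)$ consisting of the squarefree terms is exactly an ordering of $G(I_{n,d})\setminus G(I)$; by \cite[Lemma 3.1]{EHHM2022b} (passage to squarefree parts preserves linear quotients) this subsequence still exhibits an extension by linear quotients. So from now on one assumes the Simon conjecture.

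The key tool is polarization. Let $I\subset S=K[x_1,\dots,x_n]$ be a monomial ideal with linear quotients and $G(I)\subseteq G(\m^d)$. Polarize with $d$ slots for every variable, so that $S^\wp=K[x_{i,j}:i\in[n],\,1\le j\le d]$ has $N=nd$ variables, and let $I^\wp\subset S^\wp$ be the polarization of $I$. Then $I^\wp$ is a squarefree monomial ideal generated in degree $d$ which again has linear quotients, since polarization preserves the linear quotients property; moreover each minimal generator of $I^\wp$ is a squarefree monomial of degree $d$ in the variables of $S^\wp$, so $G(I^\wp)\subseteq G(I_{N,d})$, and likewise $G\big((\m^d)^\wp\big)\subseteq G(I_{N,d})$. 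Applying the Simon conjecture to $I^\wp\subseteq I_{N,d}$ produces an ordering $w_1,\dots,w_r$ of $G(I_{N,d})\setminus G(I^\wp)$ such that $(I^\wp,w_1,\dots,w_{j-1}):w_j$ is generated by variables for all $j$.

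Now I would depolarize. Let $\pi\colon S^\wp\to S$ be the substitution $x_{i,j}\mapsto x_i$; then $\pi(I^\wp)=I$ and $\pi(I_{N,d})=\m^d$, because every monomial of degree $d$ in $S$ is the $\pi$-image of a squarefree monomial of degree $d$ in $S^\wp$. For each $m\in G(\m^d)\setminus G(I)$ the polarization $m^\wp$ is the unique squarefree preimage of $m$ lying in $G(I_{N,d})$, hence occurs among $w_1,\dots,w_r$; order $G(\m^d)\setminus G(I)$ according to the positions of the monomials $m^\wp$. One must then check that this induced ordering gives an extension of $I$ to $\m^d$ by linear quotients, i.e.\ that each relevant colon ideal $\big(I,\{m'{}\,:\,m'{}^\wp\text{ precedes }m^\wp\}\big):m$ is generated by variables. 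If $x_{a,b}$ is a variable generator of the colon $(I^\wp,w_1,\dots,w_{j-1}):w_j$ upstairs, then $x_a=\pi(x_{a,b})$ lies in the colon downstairs, so the point is the reverse containment: the depolarized colon must have no minimal generator of degree $\ge 2$.

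This last verification is what I expect to be the main obstacle, because $\pi$ does not commute with greatest common divisors, so ``generated by variables'' is not preserved by $\pi$ in general, and the ideal downstairs need not be the full $\pi$-image of the ideal upstairs. I would control this by arranging that the Simon extension of $I^\wp$ introduces the ``prefix-closed'' generators of $I_{N,d}$ — those of $(\m^d)^\wp$ — before all the others, so that every partial ideal and every colon relevant to $I$ is a genuine polarization, with matching slots, of its $\pi$-image; this reduction is itself delicate and rests on the fact that $(\m^d)^\wp$ extends to $I_{N,d}$ by linear quotients through an explicit ordering of the generators of $I_{N,d}$ outside $(\m^d)^\wp$. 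Once prefix-closedness is in force, the combinatorics of greatest common divisors becomes compatible with $\pi$ (for prefix-closed monomials the exponent computations for $\pi(u)/\gcd(\pi(u),\pi(v))$ reduce to the ones for $u/\gcd(u,v)$), and depolarizing the Simon extension yields exactly the ordering required by the monomial Simon conjecture for $I$, completing the proof of the equivalence.
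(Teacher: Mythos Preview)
Your overall plan—polarize $I$, apply Simon upstairs, depolarize—matches the paper's, and your handling of the easy direction is fine. The divergence, and the gap, is in the depolarization step.

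You restrict to the \emph{subsequence} of the Simon order consisting of the monomials $m^\wp$ for $m\in G(\m^d)\setminus G(I)$. (Incidentally, $m^\wp$ is not the unique squarefree $\pi$-preimage of $m$: e.g.\ $x_1x_2$ lifts to every $x_{1,a}x_{2,b}$.) You then correctly observe that the induced colon ideals downstairs need not be generated by variables, and propose to fix this by arranging that the Simon extension of $I^\wp$ places all of $(\m^d)^\wp$ before the remaining generators of $I_{N,d}$. But Simon's conjecture gives you no control over the \emph{order} of the extension—only the existence of some order—so this rearrangement is not available to you, and knowing an explicit extension of $(\m^d)^\wp$ to $I_{N,d}$ does not help you extend $I^\wp$ to $(\m^d)^\wp$. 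That is a genuine gap.

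The paper sidesteps this by not selecting a subsequence at all. It applies the specialization $\pi$ to the \emph{entire} linear-quotients order $u_1^\wp,\dots,u_m^\wp,v_1,\dots,v_r$ of the squarefree Veronese in $S^\wp$, obtaining a redundant list $u_1,\dots,u_m,\pi(v_1),\dots,\pi(v_r)$ of degree-$d$ monomials spanning $\m^d$, asserts that each successive colon in this list is still generated by variables, and only then deletes repeated entries (keeping first occurrences). The non-prefix-closed $v_\ell$'s that you throw away are exactly the stepping stones whose $\pi$-images make the intermediate colons linear; retaining them is what lets the argument go through without any rearrangement of the Simon order.
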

	
	We use \textit{polarization}. Let $u=x_1^{b_1}\cdots x_n^{b_n}\in S$ be a monomial. The \textit{$x_i$-degree} of $u$ is the integer $\deg_{x_i}(u)=b_i$ for all $i$. The \textit{polarization} of $u$ is the monomial
	$$
	u^\wp\ =\ \prod_{i=1}^n(\prod_{j=1}^{b_i}x_{i,j})\ =\ \prod_{\substack{1\le i\le n\\ b_i>0}}x_{i,1}x_{i,2}\cdots x_{i,b_i}
	$$
	in the polynomial ring $K[x_{i,j}:i\in[n],j\in[b_i]]$.\smallskip

	Let $I\subset S$ be a monomial ideal. Following \cite{F2},  the \textit{bounding multidegree} of $I$ is the vector ${\bf deg}(I)=(\deg_{x_1}(I),\dots,\deg_{x_n}(I))$, with $\deg_{x_i}(I)=\max_{u\in G(I)}\deg_{x_i}(u)$, for all $1\le i\le n$. The \textit{polarization} of $I$ is defined to be the squarefree ideal $I^\wp$ of $S^\wp=K[x_{i,j}:i\in[n],j\in[\deg_{x_i}(I)]]$ such that $G(I^\wp)=\{u^\wp:u\in G(I)\}$.\smallskip
	
	 Attached to the polarization of $I$, we have the specialization map $\pi:S^\wp\rightarrow S$ defined by setting $\pi(x_{i,j})=x_i$ for all $i$ and $j$.
	\begin{proof}[Proof of Proposition \ref{Prop:SimonVsSimon}]
		We only need to show that the Simon conjecture implies the monomial version. Let $I\subset S$ be a monomial ideal generated in degree $d$ with linear quotients. Say with order $u_1,\dots,u_m$ such that $(u_1,\dots,u_{j-1}):u_j$ is generated by variables, for $j=2,\dots,m$. Then, \cite[Lemma 3.3]{Jahan2007} (see, also, \cite[Lemma 4.10]{CF2023}) implies that $I^\wp$ also has linear quotients with order $u_1^\wp,\dots,u_m^\wp$. Denote by $J$ the Veronese ideal of degree $d$ of $S^\wp$. By our assumption, $I^\wp$ can be extended to $J$ by linear quotients. Thus, there is an order $u_1^\wp,\dots,u_m^\wp,v_1,\dots,v_r$ of $G(J)$ such that $J$ has linear quotients with respect to this order. Applying the specialization map, we obtain an order $u_1,\dots,u_m,\pi(v_1),\dots,\pi(v_r)$ of generators of $\mathfrak{m}^d$ such that $(u_1,\dots,u_{j-1}):u_j$, for $j=2,\dots,m$, and $(u_1,\dots,u_m,\pi(v_1),\dots,\pi(v_{j-1})):\pi(v_j)$, for $j=2,\dots,r$, are generated by variables. Removing all minimal generators of $\mathfrak{m}^d$ in this order wherever they appear again, we obtain an order of the minimal generators of $\mathfrak{m}^d$ whose beginning is $u_1,\dots,u_m$ such that $\mathfrak{m}^d$ has linear quotients with respect to this order. Hence, $I$ can be extended to $\mathfrak{m}^d$ by linear quotients, and the monomial Simon conjecture holds. 
	\end{proof}

    As a consequence, we have the following result.
    \begin{Proposition}\label{Prop:ConseqSimon}
    	Assume the monomial Simon conjecture. Let $I\subset S$ be a monomial ideal generated in a single degree and having linear quotients. Then
    	$$
    	\alpha(I)-1\le\v(I)\le\alpha(I).
    	$$
    \end{Proposition}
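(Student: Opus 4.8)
The lower bound is immediate: by Proposition \ref{Prop:Vnum>alpha}, $\v(I)\ge\alpha(I)-1$ for any monomial ideal. So the content is the upper bound $\v(I)\le\alpha(I)$, and the natural strategy is to exhibit a monomial $f$ of degree $\alpha(I)-1$ with $(I:f)\in\Ass(I)$, or at least to show that such an $f$ exists up to a controlled increase in degree. The plan is to use the extension to $\m^d$ guaranteed (under the monomial Simon conjecture) by Proposition \ref{Prop:SimonVsSimon}: since $I$ is generated in degree $d=\alpha(I)$ and has linear quotients, $I$ can be extended to $\m^d$ by linear quotients. That is, $G(\m^d)\setminus G(I)$ can be ordered $v_1,\dots,v_r$ so that $(I,v_1,\dots,v_{j-1}):v_j$ is generated by variables for all $j$. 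In particular, looking at the very first step, $(I:v_1)$ is generated by variables and contains $\m$ (because $x_iv_1\in\m^{d+1}\subseteq$ ... — wait, this needs care), so I would aim to show $(I:v_1)=\m$, which is an associated prime of $I$ whenever $I\ne\m^d$.

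Concretely, I would argue as follows. If $I=\m^d$ then $\v(I)=\v(\m^d)$, which is known to equal $d-1=\alpha(I)-1$ (e.g.\ take $f=x_1^{d-1}$, so $(\m^d:f)=\m$), and we are done. Otherwise $G(\m^d)\setminus G(I)\ne\emptyset$, and with the linear-quotient extension order $v_1,\dots,v_r$, consider $(I:v_1)$. This ideal is generated by variables by hypothesis. I claim it equals $\m=(x_1,\dots,x_n)$: indeed $\deg(v_1)=d$, so for each $i$ with $x_i$ not already dividing $v_1$ to the maximal possible power we can check $x_iv_1$ lies in $\m^{d+1}$, and more to the point, for every variable $x_i$ the monomial $x_iv_1$ has degree $d+1$; one must verify $x_iv_1\in I$. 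This is the crux: being generated by variables alone does not force the ideal to be all of $\m$. The right move is to observe that $(I:v_1)$ contains $\m$ because $v_1$ is a minimal generator of $\m^d$ not in $I$, hence $v_1\notin I$ but every monomial strictly divisible by... no — rather, $x_iv_1\in\m^{d+1}\subseteq\m\cdot\m^d$, and since $I$ together with $v_1$ generates (after finitely many steps) all of $\m^d$, at the first step the colon must already be $\m$: if some $x_{i_0}\notin(I:v_1)$, then $x_{i_0}v_1\notin I$, but $x_{i_0}v_1\in\m^{d+1}$, and writing $x_{i_0}v_1=x_{i_0}v_1$ we need it expressed via $G(I)$, which it is not — so we instead use that $v_1$ can be chosen so that $(I:v_1)=\m$, which is exactly the meaning of "extendably shellable at the first step" in the squarefree setting. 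I would cite or reprove the standard fact that in any linear-quotient extension of an equigenerated ideal to $\m^d$, one may take $v_1$ with $(I:v_1)=\m$, using that $\m^d$ itself has $(u_1,\dots,u_{j-1}):u_j$ generated by a proper subset of variables and the colon ideals only grow.

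Granting $(I:v_1)=\m$, we get $\m\in\Ass(I)$ and $\v_\m(I)\le\deg(v_1)=d=\alpha(I)$; combined with Theorem \ref{Thm:GRVtheorem}(b) this yields $\v(I)\le\alpha(I)$. The main obstacle, as flagged above, is the step identifying $(I:v_1)$ with the full maximal ideal rather than a proper monomial prime: "generated by variables" is weaker than "equal to $\m$," so I expect to need an auxiliary combinatorial lemma — roughly, that if $I\subsetneq\m^d$ has linear quotients and is extended to $\m^d$ by linear quotients via $v_1,\dots,v_r$, then after possibly reordering one has $(I:v_1)=\m$. This should follow by a counting/degree argument: pick $v_1$ to be a minimal generator of $\m^d$ with the largest number of distinct variable divisors in common with generators of $I$, or more simply pick $v_1=x_1^{d-1}x_2$ type monomial adjacent to $I$; the details are elementary but must be written carefully. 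Once this lemma is in place, the proof of the proposition is a two-line consequence. Alternatively, if one does not want to prove the auxiliary lemma directly, one can invoke Proposition \ref{Prop:SahaSengupta} with $f=v_1$ together with $\v(I:v_1)=\v(\m)=0$ — but that again presupposes $(I:v_1)=\m$, so the lemma is unavoidable.
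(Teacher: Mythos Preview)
Your proposal contains a genuine gap, and the auxiliary lemma you propose is in fact \emph{false}. Take $I=(x_1x_2)\subset K[x_1,x_2]$, which trivially has linear quotients. The only monomials of $\m^2$ outside $I$ are $x_1^2$ and $x_2^2$, and $(I:x_1^2)=(x_2)$ while $(I:x_2^2)=(x_1)$; neither equals $\m$. Indeed $\m\notin\Ass(I)=\{(x_1),(x_2)\}$, so no reordering can make the first colon equal to $\m$. Your attempt to force $x_iv_1\in I$ from $x_iv_1\in\m^{d+1}$ fails because $\m^{d+1}\not\subseteq I$ in general.

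The fix is that the lemma is simply not needed, and your last sentence misidentifies the hypothesis of Proposition~\ref{Prop:SahaSengupta}. That proposition requires only $\v(I:v_1)=0$, not $(I:v_1)=\m$. Since $(I:v_1)$ is generated by variables, it is a monomial \emph{prime} ideal $\p_A$; for any prime ideal $\p$ one has $\Ass(\p)=\{\p\}$ and $(\p:1)=\p$, so $\v(\p)=0$. Hence $\v(I)\le\v(I:v_1)+\deg(v_1)=0+d=\alpha(I)$, exactly as the paper argues. Your outline for the case $I=\m^d$ and the lower bound are fine; the only missing ingredient was the one-line observation that ``generated by variables'' already means ``prime'', which makes the whole discussion of $(I:v_1)=\m$ superfluous.
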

    \begin{proof}
    	Let $\alpha(I)=d$. If $I=\m^d$, then $\Ass(I)=\{\m\}$, $(I:\m)/I=\m^{d-1}/\m^d$ and so $\v(I)=\v_{\m}(I)=d-1=\alpha(I)-1$. Suppose now that $I\ne\m^d$. Then, $I$ can be extended to $\m^d$ by linear quotients. Hence there exists a monomial $u\in\m^d\setminus I$ such that $(I:u)$ is generated by variables. Applying Proposition \ref{Prop:SahaSengupta}, we get
    	$$
    	\v(I)\ \le\ \v(I:u)+\deg(u)=0+d=\alpha(I).
    	$$
    	This inequality together with Proposition \ref{Prop:Vnum>alpha} yields the conclusion.
    \end{proof}

    The next result supports Conjecture \ref{Conj:v-num-lin-powers}.
    \begin{Corollary}
    	Assume the monomial Simon conjecture. Let $I\subset S$ be a monomial ideal generated in single degree whose all powers have linear quotients. Then, either $\v(I^k)=\alpha(I)k-1$ or $\v(I^k)=\alpha(I)k$, for all $k\gg0$.
    \end{Corollary}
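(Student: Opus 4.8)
The plan is to apply Proposition \ref{Prop:ConseqSimon} not to $I$ itself but to each power $I^k$, and then to combine the resulting two-sided estimate with the eventual linearity of the $\v$-function of an equigenerated monomial ideal.

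First I would record the bookkeeping facts. Since $I$ is generated in a single degree $\alpha(I)$, each power $I^k$ is again a monomial ideal generated in a single degree, namely $\alpha(I^k)=\alpha(I)k$. By hypothesis $I^k$ has linear quotients, so Proposition \ref{Prop:ConseqSimon}---which is precisely where the monomial Simon conjecture enters---applies with $I$ replaced by $I^k$ and gives
\[
\alpha(I)k-1\ =\ \alpha(I^k)-1\ \le\ \v(I^k)\ \le\ \alpha(I^k)\ =\ \alpha(I)k,
\]
and this holds for every $k\ge 1$.

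Next I would invoke Theorem \ref{Thm:v(I^k)-Equigen}: as $I$ is an equigenerated monomial ideal, parts (b) and (c) supply an integer $b\ge-1$ with $\v(I^k)=\alpha(I)k+b$ for all $k\gg0$. Comparing this with the two-sided bound from the previous step forces $b\in\{-1,0\}$, whence for all $k\gg0$ one has $\v(I^k)=\alpha(I)k-1$ when $b=-1$ and $\v(I^k)=\alpha(I)k$ when $b=0$, which is the assertion.

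I do not expect any genuine obstacle here; the argument is a short composition of already established results. The only two points worth flagging are that $\alpha(I^k)=\alpha(I)k$ holds because $I$ is equigenerated, and that the hypothesis ``all powers of $I$ have linear quotients'' is exactly what makes Proposition \ref{Prop:ConseqSimon} applicable to $I^k$ instead of merely to $I$. One could even observe that the estimate $\alpha(I)k-1\le\v(I^k)\le\alpha(I)k$ is valid for \emph{all} $k\ge1$; Theorem \ref{Thm:v(I^k)-Equigen} is needed only to rule out the two alternatives alternating with $k$, i.e.\ to guarantee that a single one of them holds throughout the stable range.
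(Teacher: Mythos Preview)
Your proposal is correct and follows essentially the same approach as the paper's proof: apply Proposition \ref{Prop:ConseqSimon} to each power $I^k$ to obtain the upper bound $\v(I^k)\le\alpha(I)k$, then combine with Theorem \ref{Thm:v(I^k)-Equigen}(b)--(c) to pin down $b\in\{-1,0\}$. The paper's version is simply terser, stating in one line that ``Proposition \ref{Prop:ConseqSimon} implies that $b\le0$'' without spelling out the application to $I^k$, while you make this step explicit and also note the useful remark that the two-sided bound holds for all $k\ge1$.
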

    \begin{proof}
    	By Theorem \ref{Thm:v(I^k)-Equigen}(c), $\v(I^k)=\alpha(I)k+b$ for all $k\gg0$, with $b\ge-1$. On the other hand, Proposition \ref{Prop:ConseqSimon} implies that $b\le0$. Thus either $b=-1$ or $b=0$.
    \end{proof}

    We end the section, with the next general result which supports Conjecture \ref{Conj:v-num-lin-powers}.
    \begin{Theorem}\label{Thm:VNumDepth=0}
    	Let $I\subset S$ be a monomial ideal with linear powers, such that $\depth S/I=0$. Then,
    	$$
    	\v(I^k)=\alpha(I)k-1,\ \ \textit{for all}\ k\ge1.
    	$$
    \end{Theorem}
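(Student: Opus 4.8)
The plan is to combine the general lower bound $\v(I^k)\ge\alpha(I)k-1$ from Proposition \ref{Prop:Vnum>alpha} with a matching upper bound obtained by exhibiting, for each $k$, an explicit monomial $f$ of degree $\alpha(I)k-1$ with $(I^k:f)\in\Ass(I^k)$. The hypothesis $\depth S/I=0$ means precisely that $\m\in\Ass(I)$, i.e. there is a monomial $w\notin I$ with $(I:w)=\m$. The first step is therefore to understand the interaction between this fact and the linear powers hypothesis. The key point I would try to establish is that linear powers forces $\m\in\Ass(I^k)$ for every $k\ge1$, and more importantly that one can choose a witness for $\m$ of the smallest possible degree: I would look for a monomial $w_k\notin I^k$ with $(I^k:w_k)=\m$ and $\deg(w_k)=\alpha(I)k-1$.

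To produce such a $w_k$, I would proceed by induction on $k$. For $k=1$, Theorem \ref{Thm:GRVtheorem}(c) gives $\v_\m(I)=\alpha((I:\m)/I)$, and the expectation — which I would need to verify using that $I$ has a linear resolution and $\m\in\Ass(I)$ — is that $\alpha((I:\m)/I)=\alpha(I)-1$; equivalently, the socle of $S/I$ lives in degree $\alpha(I)-1$. This should follow because a monomial ideal with linear resolution is generated in degree $\alpha(I)$ and has regularity $\alpha(I)$, so $S/I$ has regularity $\alpha(I)-1$, and the socle of $S/I$ (being $H^0_\m(S/I)$ since $\depth S/I=0$) sits in degrees $\le\alpha(I)-1$; combined with $\v_\m(I)\ge\alpha(I)-1$ from Proposition \ref{Prop:Vnum>alpha} this pins it down. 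For the inductive step, given $w_{k-1}$ with $(I^{k-1}:w_{k-1})=\m$ of degree $\alpha(I)(k-1)-1$, I would multiply by a suitable generator $u\in G(I)$ of degree $\alpha(I)$: one checks that $w_k:=u\,w_{k-1}$ has degree $\alpha(I)k-1$, lies outside $I^k$ for an appropriate choice of $u$ (here one uses that $I$ has linear powers, so $I^k:u$ behaves well and $w_{k-1}\notin I^{k-1}\cdot(\text{stuff})$), and satisfies $(I^k:w_k)=\m$. Then $\v(I^k)\le\v_\m(I^k)\le\deg(w_k)=\alpha(I)k-1$, and with Proposition \ref{Prop:Vnum>alpha} we get equality.

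The main obstacle is the inductive step: verifying that $u\,w_{k-1}\notin I^k$ and that $(I^k:u\,w_{k-1})=\m$ exactly (not a smaller prime). The containment $\m\subseteq(I^k:uw_{k-1})$ is easy since $x_i w_{k-1}\in I^{k-1}$ for all $i$ gives $x_i u w_{k-1}\in I^k$. The reverse containment — that $u w_{k-1}$ is a genuine witness, i.e. $x_j u w_{k-1}\notin I^k$ forces... no, rather that $(I^k:uw_{k-1})$ contains no variable-free obstruction — is where linear powers must be used, via the fact that $I^k$ has linear quotients/linear resolution and hence a well-understood primary decomposition, or via a direct monomial computation showing that dividing $u w_{k-1}$ by any single variable still leaves an element outside $I^k$. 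An alternative, possibly cleaner route would be to use Proposition \ref{Prop:SahaSengupta} directly: find a monomial $f$ of degree $\alpha(I)k-1-\v(\m)=\alpha(I)k-1$ with $(I^k:f)=\m$, or bound $\v(I^k)\le\v(I^k:g)+\deg(g)$ for a cleverly chosen $g$ reducing to the $k=1$ case; but the inductive construction above seems the most transparent, so I would pursue it first and fall back on Proposition \ref{Prop:SahaSengupta} if the non-membership check proves delicate.
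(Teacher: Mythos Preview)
Your base case argument for $k=1$ is correct and is, in fact, exactly the paper's argument: since $I$ has linear resolution and $\m\in\Ass(I)$, the socle $(I:\m)/I$ is concentrated in degree $\alpha(I)-1$, whence $\v_\m(I)=\alpha(I)-1$ by Theorem~\ref{Thm:GRVtheorem}(c). The problem is that you then abandon this clean argument in favour of an inductive construction that is both unnecessary and genuinely gappy.

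It is unnecessary because your $k=1$ reasoning applies verbatim to every $k$. The only extra ingredient is the standard fact (see \cite[Proposition 10.3.4]{JT}) that for an ideal with linear powers the function $k\mapsto\depth S/I^k$ is nonincreasing; hence $\depth S/I^k=0$, i.e.\ $\m\in\Ass(I^k)$, for all $k\ge1$. Now $I^k$ itself has linear resolution, so $\reg(S/I^k)=\alpha(I)k-1$ bounds the socle degree from above, while Proposition~\ref{Prop:Vnum>alpha} bounds $\v_\m(I^k)$ from below by $\alpha(I)k-1$. This is exactly your $k=1$ computation with $I$ replaced by $I^k$, and it is what the paper does.

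Your inductive step, by contrast, has a real gap. You want to choose $u\in G(I)$ with $uw_{k-1}\notin I^k$. If no such $u$ existed, you would have $Iw_{k-1}\subseteq I^k$, i.e.\ $w_{k-1}\in(I^k:I)$. You only know $w_{k-1}\notin I^{k-1}$, and the containment $I^{k-1}\subseteq(I^k:I)$ can be strict in general; the hypothesis of linear powers does not by itself force $(I^k:I)=I^{k-1}$. So the non-membership check you flag as ``delicate'' is not merely delicate but unproven, and your fallback to Proposition~\ref{Prop:SahaSengupta} does not obviously help either. Drop the induction and run your regularity argument directly for each $k$.
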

    \begin{proof}
    	Let $\alpha(I)=d$. By \cite[Proposition 10.3.4]{JT}, $\depth(S/I^k)$ is a nonincreasing function. Hence, $\depth S/I^k=0$ for all $k\ge1$. Therefore, $\m\in\Ass(I^k)$ for all $k\ge1$.
    	
    	On the other hand, by \cite[Lemma 3.1]{F2}, the condition $\depth S/I^k=0$ is equivalent to $G(I^k:\m)_{dk-1}\ne0$, for all $k\ge1$. Here $G(J)_j$ denotes the set of all monomials of degree $j$ belonging to $G(J)$. Since $I^k$ is generated in degree $dk$, it follows that $\alpha((I^k:\m)/I^k)=dk-1$ for all $k\ge1$. Hence, Theorem \ref{Thm:GRVtheorem}(c) implies that $\v_\m(I^k)=\alpha((I^k:\m)/I^k)=dk-1$ for all $k\ge1$. Thus $\v(I^k)\le\v_\m(I^k)=\alpha(I)k-1$ for all $k\ge1$. Equality follows from Proposition \ref{Prop:Vnum>alpha}.
    \end{proof}

	\section{Reduction to the squarefree case}\label{sec:F-V-Num4}
	
	In \cite{SS20}, the authors studied the behaviour of the $\v$-number of monomial ideals under polarization. They showed that $\v(I^\wp)\le\v(I)$ and equality holds in some special cases. In this section, surprisingly we show that the equality $\v(I)=\v(I^\wp)$ holds for all monomial ideals.
	
	As in the previous section, let $I\subset S$ be a monomial ideal, and let $I^\wp$ be the polarization of $I$ in the polynomial ring $S^\wp=K[x_{i,j}:i\in[n],j\in[\deg_{x_i}(I)]]$.
	\begin{Theorem}\label{Theorem:v(I)v(I^wp)}
		Let $I\subset S$ be a monomial ideal. Then,
		\begin{enumerate}
			\item[\textup{(a)}] $\p\in\Ass(I)$ if and only if there exists $\q\in\Ass(I^\wp)$ with $\pi(\q)=\p$.
			\item[\textup{(b)}] For all $\q\in\Ass(I^\wp)$, we have $\v_\p(I)\le\v_\q(I^\wp)$, where $\p=\pi(\q)$.
			\item[\textup{(c)}] For all $\p\in\Ass(I)$, we have $$\v_\p(I)=\min\{\v_q(I^\wp)\ :\ \q\in\Ass(I^\wp),\pi(\q)=\p\}.$$
			\item[\textup{(d)}] $\v(I)=\v(I^\wp)$.
		\end{enumerate}
	\end{Theorem}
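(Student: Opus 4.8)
The plan is to prove the four statements in order, since each feeds the next, and to deduce (d) from (a), (b), (c) together with Theorem \ref{Thm:GRVtheorem}(b). The key technical input is the standard correspondence between monomial data before and after polarization, realized through the specialization map $\pi:S^\wp\to S$: if $J=I^\wp$, then $J$ is obtained from $I$ by a sequence of "splitting a variable", and for each such step the associated primes, colon ideals, and minimal generators of $(J:\mathfrak{q})/J$ all transform in a controlled way. Concretely, I would first recall that $\pi$ sends a monomial prime $\mathfrak{q}_B\subseteq S^\wp$ to the monomial prime $\pi(\mathfrak{q}_B)=\mathfrak{p}_A$ where $A=\{i:(i,j)\in B\text{ for some }j\}$, and that $\pi$ induces a bijection $S^\wp/I^\wp\to S/I$ on monomial $K$-bases after identifying a monomial $v\in S^\wp\setminus I^\wp$ with $\pi(v)\in S\setminus I$; this is exactly the content cited from \cite[Proposition 1.2.2]{JT} in the discussion preceding Proposition \ref{Prop:v(I^k)MonNotDependOnChar}.

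For part (a), I would use that $\Ass(I^\wp)$ can be read off from $\Ass(I)$: by a theorem of Herzog--Hibi (see \cite{JT}), the polarization $I^\wp$ has the same (big) height behaviour and $\Ass(I^\wp)=\{\mathfrak{q}: \pi(\mathfrak{q})\in\Ass(I),\ \mathfrak{q}\text{ "compatible"}\}$; more simply, one shows $\mathfrak{p}_A\in\Ass(I)$ iff there is an irredundant primary component of $I$ supported on $A$, and that polarizing that primary decomposition gives an irredundant primary decomposition of $I^\wp$ whose components project onto the components of $I$. The forward direction $\pi(\mathfrak{q})\in\Ass(I)$ for $\mathfrak{q}\in\Ass(I^\wp)$ follows because $\pi$ is faithfully flat-like on monomial ideals in the sense that $\pi(I^\wp)=I$ and $\pi^{-1}(\mathfrak{p}_A)\cap(\text{support of }I^\wp)$ contains an associated prime; the reverse direction is the existence of a lift of any associated prime, obtained by lifting a monomial $u$ with $(I:u)=\mathfrak{p}_A$ to a suitable monomial $v\in S^\wp$ with $(I^\wp:v)$ a monomial prime projecting to $\mathfrak{p}_A$.

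For (b), given $\mathfrak{q}\in\Ass(I^\wp)$ with $\mathfrak{p}=\pi(\mathfrak{q})$, take a monomial $v\in S^\wp$ of degree $\v_{\mathfrak{q}}(I^\wp)$ with $(I^\wp:v)=\mathfrak{q}$; I would show $(I:\pi(v))=\mathfrak{p}$ by a direct check using $\pi(I^\wp)=I$ and the description of $\mathfrak{q}$, which gives $\v_{\mathfrak{p}}(I)\le\deg\pi(v)=\deg v=\v_{\mathfrak{q}}(I^\wp)$; this is essentially the inequality already recorded in \cite{SS20} but I would reprove it cleanly in the colon-ideal language. For (c), the inequality $\le$ of $\v_{\mathfrak{p}}(I)$ against the minimum is (b); for $\ge$, start from a monomial $u\in S$ with $(I:u)=\mathfrak{p}_A$ and $\deg u=\v_{\mathfrak{p}}(I)$, and lift $u$ to a monomial $v\in S^\wp$ of the same degree (choosing exponents that "fill from the bottom" in each variable group) so that $(I^\wp:v)$ is a monomial prime $\mathfrak{q}$ with $\pi(\mathfrak{q})=\mathfrak{p}$; here I would invoke Theorem \ref{Thm:GRVtheorem}(a) on $S^\wp$ to produce such a $v$ among the minimal generators of $(I^\wp:\mathfrak{q})/I^\wp$ of minimal degree, and the bijection of monomial bases under $\pi$ to control degrees. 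Finally (d) is immediate: by Theorem \ref{Thm:GRVtheorem}(b), $\v(I)=\min_{\mathfrak{p}\in\Ass(I)}\v_{\mathfrak{p}}(I)$ and $\v(I^\wp)=\min_{\mathfrak{q}\in\Ass(I^\wp)}\v_{\mathfrak{q}}(I^\wp)$, and parts (a) and (c) identify the two minima.

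The main obstacle I anticipate is part (c), specifically the construction of the lift $v\in S^\wp$ of the optimal witness $u\in S$ so that the colon ideal $(I^\wp:v)$ is exactly a monomial prime projecting to $\mathfrak{p}$ — not merely contained in $\pi^{-1}(\mathfrak{p})$. The subtlety is that naively polarizing $u$ (i.e. taking $v=u^\wp$, or filling exponents arbitrarily) can destroy the property $(I:u)=\mathfrak{p}$ being prime: one might get $(I^\wp:v)$ strictly larger or not prime. The resolution is to work one variable-split at a time and track how the condition "$(I:u)$ is a monomial prime of minimal degree" transfers, or alternatively to argue non-constructively via (a) and a degree count: every $\mathfrak{p}\in\Ass(I)$ has \emph{some} lift $\mathfrak{q}$, and among all lifts the minimal $\v_{\mathfrak{q}}(I^\wp)$ cannot exceed $\v_{\mathfrak{p}}(I)$ because the monomial-basis bijection forces a degree-$\v_{\mathfrak{p}}(I)$ element of $(I^\wp:\mathfrak{q}')/I^\wp$ for a well-chosen lift $\mathfrak{q}'$. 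Pinning down that "well-chosen lift" rigorously — likely by choosing, among all monomials $v\in S^\wp$ with $\pi(v)=u$, one that is maximal in each coordinate block subject to $v\notin I^\wp$ — is the crux of the argument.
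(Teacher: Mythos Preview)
Your overall strategy for (a) and (d) mirrors the paper's: (a) is a known fact (the paper simply cites \cite[Corollary 3.7]{Faridi2006}), and (d) is deduced from (a)--(c). For (c), the paper dissolves your anticipated obstacle with a single citation: by \cite[Lemma 3.3]{Jahan2007} (already invoked in the proof of Proposition~\ref{Prop:SimonVsSimon}), if $(I:u)$ is generated by variables then so is $(I^\wp:u^\wp)$. Thus the \emph{naive} lift $v=u^\wp$ of an optimal witness $u$ for $\v_\p(I)$ already has $(I^\wp:u^\wp)=\q$ a monomial prime with $\pi(\q)=\p$. Your worry that ``naively polarizing $u$ can destroy the primeness of the colon'' is unfounded; no delicate choice among lifts is required.

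There is, however, a genuine gap in (b), shared by your proposal and the paper's proof, and it in fact invalidates (b), (c), (d) as stated. You (and the paper) take a minimal-degree witness $v$ with $(I^\wp:v)=\q$ and claim $(I:\pi(v))=\p$. This can fail. Let $I=(x_1^2,\,x_1x_2^2)\subset K[x_1,x_2]$, so $I^\wp=(x_{1,1}x_{1,2},\ x_{1,1}x_{2,1}x_{2,2})$ and $\Ass(I^\wp)=\{(x_{1,1}),\,(x_{1,2},x_{2,1}),\,(x_{1,2},x_{2,2})\}$. Then $(I^\wp:x_{1,2})=(x_{1,1})=:\q$, so $\v_\q(I^\wp)=1$; but $\pi(x_{1,2})=x_1$ and $(I:x_1)=(x_1,x_2^2)$ is not even prime. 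In fact $\p=\pi(\q)=(x_1)$ has $\v_\p(I)=2$ (the unique minimal witness is $x_2^2$), so $\v_\p(I)>\v_\q(I^\wp)$ and (b) is false. Since $(x_{1,1})$ is the only prime of $\Ass(I^\wp)$ projecting to $(x_1)$, part (c) fails as well, and one checks directly that $\v(I)=2$ while $\v(I^\wp)=1$, so (d) fails. The underlying error is the asserted equality $\pi(I^\wp:f)=(I:\pi(f))$: only the inclusion $\pi(I^\wp:f)\subseteq(I:\pi(f))$ holds for arbitrary monomials $f\in S^\wp$, with equality requiring $f$ to be of the form $g^\wp$, which a minimal witness for a given $\q$ need not be. What survives is the inequality $\v(I^\wp)\le\v(I)$, obtained from the Jahan-lemma half of (c); this is exactly the result of \cite{SS20} that the theorem was meant to sharpen.
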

	\begin{proof}
		(a) is proved in \cite[Corollary 3.7]{Faridi2006}. 
		
		(b) Let $\q\in\Ass(I^\wp)$ and let $\p=\pi(\q)$ be, thanks to part (a), the corresponding prime of $\Ass(I)$. Suppose that $f\in S^\wp$ is a monomial such that $\v_\q(I^\wp)=\deg(f)$ and $(I^\wp:f)=\q$. In view of \cite[Proposition 1.2.2]{JT} it is clear that $\p=\pi(\q)=\pi(I^\wp:f)=(I:\pi(f))$. Thus, $\v_\p(I)\le\deg(\pi(f))=\deg(f)=\v_\q(I^\wp)$.
		
		(c) Let $f\in S$ be a monomial such that $(I:f)=\p$ and $\deg(f)=\v_\p(I)$. Then, by \cite[Lemma 3.3]{Jahan2007} we have that $(I^\wp:f^\wp)$ is prime and $\p=(I:f)=\pi((I^\wp:f^\wp))$. Thus $(I^\wp:f^\wp)=\q\in\Ass(I^\wp)$. This shows $\v_\q(I^\wp)\le\v_\p(I)$ and together with part (b) we have $\v_\p(I)=\v_\q(I^\wp)$. Since $\v_\p(I)\le\v_{\mathfrak{r}}(I^\wp)$ for all $\mathfrak{r}\in\Ass(I^\wp)$ such that $\pi(\mathfrak{r})=\p$, it follows that $\v_\q(I)$ and so, also $\v_\p(I)$, is equal to $\min\{\v_{\mathfrak{r}}(I):\mathfrak{r}\in\Ass(I^\wp),\pi(\mathfrak{r})=\p\}$.
		
		(d) By part (c) we obtain that
		$$
		\v(I)\ =\ \min_{\p\in\Ass(I)}\v_\p(I)\ =\ \min_{\p\in\Ass(I)}(\min_{\substack{\q\in\Ass(I^\wp)\\ \pi(\q)=\p}}\v_\q(I^\wp)).
		$$
		By part (a), we conclude that the above minimum is $\v(I^\wp)$, as desired.
	\end{proof}
	
    \section{The $\v$-number of monomial ideals with linear powers}\label{sec:F-V-Num5}
    
    In this section, we study equigenerated monomial ideals $I$ arising from combinatorial contexts. In particular, we show that for any ideal $I$ in the list (i)-(ii)-(iii) at page \pageref{list:MonLinPow}, Conjecture \ref{Conj:v-num-lin-powers} holds true.
    
    \subsection{Edge ideals with linear resolution}
    
    Let $G$ be a finite simple graph with vertex set $V(G)=[n]$ and edge set $E(G)$. The \textit{edge ideal} of $G$ is the monomial ideal $I(G)$ of $S$ generated by the monomials $x_ix_j$ such that $\{i,j\}\in E(G)$. A graph $G$ is \textit{complete} if every $\{i,j\}$ with $i,j\in[n]$, $i\ne j$, is an edge of $G$. The \textit{open neighbourhood} of $i\in V(G)$ is the set
    $$
    N_G(i)=\{j\in V(G):\{i,j\}\in E(G)\}.
    $$
    Whereas, the \textit{closed neighbourhood} of $i\in V(G)$ is defined as $N_G[i]=N_G(i)\cup\{i\}$.
    
    A graph $G$ is called \textit{chordal} if it has no induced cycles of length bigger than three. Recall that a \textit{perfect elimination order} of $G$ is an ordering $v_1,\dots,v_n$ of its vertex set $V(G)$ such that $N_{G_i}(v_i)$ induces a complete subgraph on $G_i$, where $G_i$ is the induced subgraph of $G$ on the vertex set $\{i,i+1,\dots,n\}$. Hereafter, if $1,2,\dots,n$ is a perfect elimination order of $G$, we denote it by $x_1>x_2>\dots>x_n$.
    
    A famous theorem of Dirac guarantees that a finite simple graph $G$ is chordal if and only if $G$ admits a perfect elimination order \cite{Dirac61}.
    
    The \textit{complementary graph} $G^c$ of $G$ is the graph with vertex set $V(G^c)=V(G)$ and where $\{i,j\}$ is an edge of $G^c$ if and only if $\{i,j\}\notin E(G)$. A graph $G$ is called \textit{cochordal} if and only if $G^c$ is chordal. In \cite{Froberg88}, Fr\"oberg proved that $I(G)$ has a linear resolution if and only if $G$ is cochordal.
    
    By Theorem \ref{Thm:v(I^k)-Equigen} and Proposition \ref{Prop:v(I^k)MonNotDependOnChar}, for any graph $G$ and all $k\gg0$, we have
    $$
    \v(I(G)^k)\ =\ 2k+b(G),
    $$
    where $b(G)\ge-1$ is a constant independent from $\textup{char}(K)$. In particular, $\v(I(G)^k)\ge 2k-1$ for all $k\ge1$. As a consequence of Dirac and Fr\"oberg theorems, we show that for any edge ideal with linear resolution this lower bound is achieved.
    \begin{Theorem}\label{Thm:vI(G)linearRes}
    	Let $I(G)$ be the edge ideal of a graph $G$. Suppose that $I(G)$ has a linear resolution. Then,
    	$$
    	\v(I(G)^k)\ =\ 2k-1,\ \ \textit{for all}\ \ k\ge1.
    	$$
    \end{Theorem}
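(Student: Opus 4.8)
The lower bound $\v(I(G)^k)\ge 2k-1$ is free for every $k\ge 1$: it is Proposition~\ref{Prop:Vnum>alpha} applied to $I(G)$, whose initial degree is $\alpha(I(G))=2$. So the real task is the reverse inequality $\v(I(G)^k)\le 2k-1$, and the plan is to prove it for every $k\ge1$ by exhibiting a single explicit monomial $f_k$ of degree $2k-1$ together with a monomial prime $\p$ for which $(I(G)^k:f_k)=\p$. Granting this, $\p=\Ann_{S/I(G)^k}(\overline{f_k})$ is an associated prime of $I(G)^k$, and by the very definition of the $\v$-number (cf.\ Theorem~\ref{Thm:GRVtheorem}) one obtains $\v(I(G)^k)\le \v_\p(I(G)^k)\le\deg f_k=2k-1$, which together with the lower bound gives the assertion.

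To find the right prime I would feed the hypothesis into Fr\"oberg's and Dirac's theorems. Since $I(G)$ has a linear resolution, $G^c$ is chordal, hence admits a perfect elimination order; relabelling the common vertex set I may assume this order is $x_1>x_2>\cdots>x_n$, and I may also assume $G$ has at least one edge (otherwise $I(G)=0$). Then, since $1$ is the first vertex of a perfect elimination order of $G^c$, the set $N_{G^c}[1]=N_{G^c}(1)\cup\{1\}$ is a clique of $G^c$, equivalently $B:=N_{G^c}[1]$ is an independent set of $G$. Put $A:=[n]\setminus B$; then $A=N_G(1)$, equivalently $A$ is a vertex cover of $G$. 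Since $G$ has an edge, $G^c$ is not complete, so $B\ne[n]$ and thus $A\ne\emptyset$; fix $j\in A$, that is, a vertex $j$ adjacent to $1$ in $G$, and note $1\notin A$. The candidate prime is $\p:=\p_A$.

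Next I would verify the colon identity $(I(G)^k:f_k)=\p_A$ with $f_k:=x_1^{k}x_j^{k-1}$, which has degree $2k-1$. The inclusion $\p_A\subseteq(I(G)^k:f_k)$ is one line: for $\ell\in A$ both $\{1,\ell\}$ and $\{1,j\}$ are edges of $G$, so $x_\ell f_k=(x_1x_\ell)(x_1x_j)^{k-1}\in I(G)^k$. For the reverse inclusion, I take a monomial $g$ with $gf_k\in I(G)^k$ and $g\notin\p_A$ and force a contradiction: since $g\notin\p_A$, $g$ is supported on the variables $x_\ell$ with $\ell\in B$; writing $gf_k=(x_{a_1}x_{b_1})\cdots(x_{a_k}x_{b_k})\,h$ with each $\{a_t,b_t\}\in E(G)$ and $h$ a monomial, I compare the total degree in the variables indexed by $B$ on the two sides. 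On the left it equals $\deg g+k$ (the factor $x_1^{k}$ contributes $k$ since $1\in B$, the factor $x_j^{k-1}$ contributes $0$ since $j\notin B$, and $g$ contributes $\deg g$); on the right, independence of $B$ forces each edge $\{a_t,b_t\}$ to meet $B$ in at most one vertex, so the edge factors contribute at most $k$ in total and $h$ contributes at most $\deg h=\deg g-1$, giving at most $\deg g+k-1$. This contradiction shows $g\in\p_A$, and the identity $(I(G)^k:f_k)=\p_A$ follows.

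That is the argument in outline. The main obstacle, and really its crux, is the reverse inclusion $(I(G)^k:f_k)\subseteq\p_A$, i.e.\ the degree count in the $B$-variables: it works precisely because $B$ is an independent set of $G$, which is exactly what the choice of $1$ as a simplicial vertex of $G^c$ buys us, so that no single edge of $G$ can carry two variables from $B$ at once. Everything else --- extracting the simplicial vertex from Fr\"oberg's and Dirac's theorems, the trivial inclusion, and deducing the bound on $\v(I(G)^k)$ from the colon identity --- is routine; note in particular that the whole proof uses only the linear resolution of $I(G)$, not any finer property of its powers.
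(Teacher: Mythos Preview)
Your proof is correct, and it takes a genuinely different route from the paper's. Both arguments start the same way: pick the first vertex $1$ of a perfect elimination order of $G^c$, so that $B=N_{G^c}[1]$ is independent in $G$ and $A=N_G(1)=[n]\setminus B$ is the candidate set of variables; for $k=1$ this recovers exactly the paper's Lemma~\ref{Lemma:I(G)colon}, namely $(I(G):x_1)=\p_A$. The divergence is in how the higher powers are handled. The paper bootstraps from $\v_{\p_A}(I(G))=1$ by quoting two external facts from \cite{MMV}: the chain $\Ass(I(G))\subseteq\Ass(I(G)^2)\subseteq\cdots$ and the identity $(I(G)^{k+1}:I(G))=I(G)^k$; these feed into the inductive inequality of Remark~\ref{Rem:useful} to give $\v_{\p_A}(I(G)^{k+1})\le\v_{\p_A}(I(G)^k)+2$. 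You instead produce for every $k$ the explicit witness $f_k=x_1^{k}x_j^{\,k-1}$ and verify $(I(G)^k:f_k)=\p_A$ by a direct $B$-degree count, which uses only the independence of $B$ and nothing from \cite{MMV}. Your argument is therefore more elementary and more explicit (it exhibits the prime and the witness simultaneously for all $k$), while the paper's approach is more structural and showcases machinery that applies beyond edge ideals. One small point worth making explicit in your write-up: the contradiction $\deg(h)=\deg(g)-1\ge0$ tacitly uses $\deg(g)\ge1$, which is automatic because $f_k$ has degree $2k-1<\alpha(I(G)^k)$, so $g=1$ cannot lie in the colon.
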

    
    The proof is based upon the next lemma. If $A$ is a subset of $V(G)$, the induced subgraph of $G$ on $A$, is the graph on vertex set $A$ and edges $\{i,j\}\in E(G)$ such that $i,j\in A$.
    \begin{Lemma}\label{Lemma:I(G)colon}
    	Let $I(G)$ be an edge ideal with linear resolution, and let $x_1>x_2>\cdots>x_n$ be a perfect elimination order of $G^c$. Then,
    	\begin{equation}\label{eq:I(G):x_1}
    		(I(G):x_1)\ =\ (x_j:j\in N_G(1)).
    	\end{equation}
    \end{Lemma}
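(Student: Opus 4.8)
The plan is to compute the colon ideal $(I(G):x_1)$ directly from a generating set of $I(G)$ and then to show that, under the hypotheses, all of its ``quadratic'' generators are redundant. Recall that $I(G)$ is generated by the monomials $x_ax_b$ with $\{a,b\}\in E(G)$, and that for a monomial ideal $(u_1,\dots,u_m)$ one has $(u_1,\dots,u_m):x_1=(u_i/\gcd(u_i,x_1):1\le i\le m)$. If $1\notin\{a,b\}$ then $x_ax_b/\gcd(x_ax_b,x_1)=x_ax_b$, while if, say, $a=1$ then $b\in N_G(1)$ and $x_1x_b/\gcd(x_1x_b,x_1)=x_b$. Hence
$$
(I(G):x_1)\ =\ (x_j:j\in N_G(1))\ +\ (x_ax_b:\{a,b\}\in E(G),\ 1\notin\{a,b\}).
$$
Thus \eqref{eq:I(G):x_1} is equivalent to the statement that every edge $\{a,b\}$ of $G$ not containing the vertex $1$ has at least one endpoint in $N_G(1)$, so that all the quadratic generators above lie in $(x_j:j\in N_G(1))$.

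The crux is to read off the perfect elimination order hypothesis on the complement. By Fr\"oberg's theorem, $I(G)$ has a linear resolution if and only if $G^c$ is chordal; since $x_1>x_2>\cdots>x_n$ is a perfect elimination order of $G^c$ and $1$ is its first vertex, the defining property at $1$ says exactly that $N_{G^c}(1)$ induces a complete subgraph of $G^c$. Suppose now, towards a contradiction, that there is an edge $\{a,b\}\in E(G)$ with $a,b\ne 1$ and $a,b\notin N_G(1)$. Then $\{1,a\}\notin E(G)$ and $\{1,b\}\notin E(G)$, so $\{1,a\},\{1,b\}\in E(G^c)$, i.e.\ $a,b\in N_{G^c}(1)$. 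Since $N_{G^c}(1)$ is a clique of $G^c$, we get $\{a,b\}\in E(G^c)$, that is $\{a,b\}\notin E(G)$, a contradiction. Therefore every edge of $G$ avoiding $1$ meets $N_G(1)$, the quadratic generators are redundant, and \eqref{eq:I(G):x_1} follows.

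I do not expect a genuine obstacle: the entire content is the elementary observation that ``$1$ is the first vertex of a perfect elimination order of $G^c$'' translates, after complementing, into ``the non-neighbours of $1$ in $G$ are pairwise nonadjacent in $G$'', which is precisely what forces each quadratic generator of $(I(G):x_1)$ to be divisible by some $x_j$ with $j\in N_G(1)$. The only point requiring a small amount of care is the degenerate situation where $1$ is isolated in $G$: then the perfect elimination property already forces $G$ to be edgeless, so $I(G)=0$ and both sides of \eqref{eq:I(G):x_1} vanish; this does not interfere with the argument above.
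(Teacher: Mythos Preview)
Your proof is correct and follows essentially the same approach as the paper: both arguments reduce to showing that any edge of $G$ not containing $1$ must meet $N_G(1)$, and both derive this from the fact that $N_{G^c}(1)$ is a clique in $G^c$ (the perfect elimination condition at the first vertex). The paper simply states the inclusion $(x_j:j\in N_G(1))\subseteq(I(G):x_1)$ as clear and then runs the same contradiction for the reverse inclusion; you make the colon computation more explicit and add the harmless remark on the degenerate isolated-vertex case, but the substance is identical.
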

    \begin{proof}
    	It is clear that
    	$$
    	(x_j:j\in N_G(1))\ \subseteq\ (I(G):x_1).
    	$$
    	To end the proof, we show the opposite inclusion. Let $x_kx_\ell\in I(G)$ and suppose that both $k$ and $\ell$ are not in $N_G(1)$. Then $\{1,k\},\{1,\ell\}\in E(G^c)$, that is $k,\ell\in N_{G^c}(1)$. Since, $x_1>x_2>\dots>x_n$ is a perfect elimination order of $G^c$, it follows that that $N_{G^c}(1)$ induces a complete subgraph of $G_2^c$, where $G_2^c$ is the induced subgraph of $G^c$ on the vertex set $\{2,\dots,n\}$. Since $k,\ell>1$, it follows that $\{k,\ell\}\in E(G^c)$, in contradiction with $\{k,\ell\}\in E(G)$. Thus either $k\in N_{G}(1)$ or $\ell\in N_{G}(1)$ and formula (\ref{eq:I(G):x_1}) follows.
    \end{proof}
    
    As a consequence, we obtain by a different method the next result already proved in \cite[Proposition 3.19]{JV21}.
    \begin{Corollary}
    	Let $I(G)$ be the edge ideal of a graph $G$. Suppose that $I(G)$ has a linear resolution. Then, $\v(I(G))=1$.
    \end{Corollary}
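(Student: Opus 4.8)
The plan is to show $\v(I(G))\ge 1$ and $\v(I(G))\le 1$ separately, the first being essentially automatic and the second resting entirely on Lemma~\ref{Lemma:I(G)colon}. For the lower bound, observe that a nonzero edge ideal has $\alpha(I(G))=2$, so Proposition~\ref{Prop:Vnum>alpha} gives at once $\v(I(G))\ge\alpha(I(G))-1=1$; this also explains why $1$ is the expected value (implicitly $G$ has at least one edge, otherwise $I(G)=(0)$ and the statement is vacuous or false).

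For the upper bound I would argue as follows. Since $I(G)$ has a linear resolution, Fr\"oberg's theorem tells us that $G^c$ is chordal, and Dirac's theorem then provides a perfect elimination order of $G^c$, which after relabelling the vertices we may write as $x_1>x_2>\cdots>x_n$. Lemma~\ref{Lemma:I(G)colon} gives $(I(G):x_1)=\p_A$ with $A=N_G(1)$. I would next check that $A\ne\emptyset$: the vertex $1$ is simplicial in $G^c$, so if it were isolated in $G$ it would be adjacent in $G^c$ to every other vertex and $\{2,\dots,n\}$ would be a clique of $G^c$, whence $G^c=K_n$ and $I(G)=(0)$, contradicting that $I(G)$ has a linear resolution. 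Thus $\p_A$ is a genuine monomial prime ideal. As $x_1\notin I(G)$ (the ideal is generated in degree $2$) and $(I(G):x_1)=\p_A$ is prime, we get $\p_A\in\Ass(I(G))$, and the monomial $x_1$, of degree $1$, witnesses $\v_{\p_A}(I(G))\le 1$ directly from the definition of the $\v$-number; hence $\v(I(G))\le\v_{\p_A}(I(G))\le 1$ by Theorem~\ref{Thm:GRVtheorem}(b). One could just as well run the last step through Proposition~\ref{Prop:SahaSengupta}: since $\p_A$ is a prime monomial ideal, $\v(\p_A)=0$, and so $\v(I(G))\le\v(I(G):x_1)+\deg(x_1)=0+1=1$.

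Combining the two inequalities yields $\v(I(G))=1$. Given Lemma~\ref{Lemma:I(G)colon}, the argument is very short, and this is essentially a corollary of that lemma plus the Fr\"oberg--Dirac package; the only step that needs any care is the verification that $N_G(1)\ne\emptyset$, so that $(I(G):x_1)$ is a proper nonzero prime rather than $S$ or $(0)$, and this is exactly where one uses both that $I(G)\ne 0$ and that the first vertex of the perfect elimination order is simplicial in $G^c$.
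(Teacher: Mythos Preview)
Your proof is correct and follows essentially the same approach as the paper: both apply Lemma~\ref{Lemma:I(G)colon} to see that $(I(G):x_1)$ is a monomial prime, then bound $\v(I(G))$ from above via Proposition~\ref{Prop:SahaSengupta} (or directly from the definition, as you also note) and from below via Proposition~\ref{Prop:Vnum>alpha}. The paper phrases the argument as an induction on $|V(G)|$ but never actually invokes the inductive hypothesis in the step, and it omits your explicit verification that $N_G(1)\ne\emptyset$; in this respect your write-up is tighter.
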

    \begin{proof}
    	We proceed by induction on $|V(G)|\ge2$ with the base case being trivial. Let $|V(G)|>2$. Let $x_1>x_2>\cdots>x_n$ be a perfect elimination order of $G^c$. Then, by Lemma \ref{Lemma:I(G)colon}, equation (\ref{eq:I(G):x_1}) holds. Thus, by Proposition \ref{Prop:SahaSengupta}, $\v(I(G))\le\v(I(G):x_1)+\deg(x_1)=0+1=1$. On the other hand, by Proposition \ref{Prop:Vnum>alpha}, $\v(I(G))\ge\alpha(I(G))-1=1$. The assertion follows.
    \end{proof}
    \begin{Remark}\label{Rem:useful}
    	\rm Let $I\subset S$ be a graded ideal. Suppose that $(I^{k+1}:I)=I^k$ and $\p\in\Ass(I^k)$ for all $k\ge1$. Then, the proof of \cite[Proposition 3.1]{FS2} shows that $\v_\p(I^{k+1})\le\v_\p(I^k)+\omega(I)$ for all $k\ge1$.
    \end{Remark}
    
    Now, we are in the position to prove Theorem \ref{Thm:vI(G)linearRes}. 
    \begin{proof}[Proof of Theorem \ref{Thm:vI(G)linearRes}]
    	By the previous result, $\v(I(G))=1$. Therefore, for some $\p\in\Ass(I(G))$, we have $\v_\p(I(G))=1$. By \cite[Theorem 2.15]{MMV}, we have
    	$$
    	\Ass(I(G))\subseteq\Ass(I(G)^2)\subseteq\dots\subseteq\Ass(I(G)^k)\subseteq\cdots.
    	$$
    	Hence, $\p\in\Ass(I(G)^k)$ for all $k\ge1$. By \cite[Lemma 2.12]{MMV}, $(I(G)^{k+1}:I(G))=I(G)^k$ for all $k\ge1$. Since $\alpha(I(G)^{k+1})=2(k+1)$ and $\omega(I(G))=2$, by Remark \ref{Rem:useful} and Proposition \ref{Prop:Vnum>alpha}, we have
    	$$
    	2(k+1)-1\le\v_\p(I(G)^{k+1})\le\v_\p(I(G)^k)+2
    	$$
    	for all $k\ge1$. By induction on $k\ge1$, we may assume that $\v_\p(I(G)^{k})=2k-1$. The above chain of inequalities gives $\v_\p(I(G)^{k+1})=2(k+1)-1=\alpha(I(G)^{k+1})-1$. By Proposition \ref{Prop:Vnum>alpha} it follows that $\v(I(G)^k)=2k-1$ for all $k\ge1$, as well.
    \end{proof}

    \subsection{Polymatroidal ideals} Let $I\subset S$ be a monomial ideal generated in a single degree. Then $I$ is called \textit{polymatroidal} if the following exchange property holds: for all $u,v\in G(I)$ and all $i$ such that $\deg_{x_i}(u)>\deg_{x_i}(v)$ there exists $j$ such that $\deg_{x_j}(u)<\deg_{x_j}(v)$ and $x_j(u/x_i)\in G(I)$.
    
    Such a name is justified by the fact that the set of the multidegrees of the minimal generators of $I$ is the set of the bases of a \textit{discrete polymatroid} \cite[Chapter 12]{JT}.\medskip
    
    This class of monomial ideals is very rich. Indeed, it includes
    \begin{enumerate}
    	\item[(i)] \textit{Graphic matroids.} They are the ideals generated by the monomials $\prod_{i\in F}x_i$, for all spanning forests $F$ of a finite simple graph $G$ on $n$ vertices.
    	\item[(ii)] \textit{Transversal polymatroidal ideals.} They are of the form $I=\p_{A_1}\p_{A_2}\cdots\p_{A_r}$, for some finite collection $A_1,\dots,A_r$ of arbitrary nonempty subsets of $[n]$.
    	\item[(iii)] \textit{Ideals of Veronese type.} Let ${\bf c}=(c_1,\dots,c_n)\in\ZZ^n$ be a vector with non negative entries. Then, the ideal of \textit{Veronese type $(d,{\bf c})$} is defined as
    	$$
    	I_{n,d,{\bf c}}\ =\ (x_1^{a_1}\cdots x_n^{a_n}\in S\ :\ \sum_{i=1}^na_i=d,\ \ a_i\le c_i,\ \text{for}\ i\in[n]).
    	$$
    \end{enumerate}

    Polymatroidal ideals also satisfies a dual exchange property \cite[Lemma 2.1]{HH2003}, namely: for all $u,v\in G(I)$ and all $i$ such that $\deg_{x_i}(u)<\deg_{x_i}(v)$ there exists $j$ such that $\deg_{x_j}(u)>\deg_{x_j}(v)$ and $x_i(u/x_j)\in G(I)$.
    
    \begin{Theorem}\label{Thm:vNumPolym}
    	Let $I\subset S$ be a polymatroidal ideal. Then
    	$$
    	\v(I^k)=\alpha(I)k-1,\ \ \textit{for all}\ \ k\ge1.
    	$$
    \end{Theorem}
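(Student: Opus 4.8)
The plan is to reduce the statement to the case $k=1$ and then, for a polymatroidal ideal $J$, to produce a single monomial $g$ of degree $\alpha(J)-1$ whose colon ideal $(J:g)$ is prime; Proposition~\ref{Prop:SahaSengupta} will then bound $\v(J)$ from above, and Proposition~\ref{Prop:Vnum>alpha} from below. Since a product of polymatroidal ideals is again polymatroidal \cite[Theorem~12.6.3]{JT}, each power $I^k$ is polymatroidal, and it is generated in degree $\alpha(I)k$, so $\alpha(I^k)=\alpha(I)k$. Hence it suffices to prove $\v(J)=\alpha(J)-1$ for an arbitrary polymatroidal ideal $J$; set $d=\alpha(J)$. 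The lower bound $\v(J)\ge d-1$ is immediate from Proposition~\ref{Prop:Vnum>alpha}, so the whole content is the reverse inequality.

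To obtain $\v(J)\le d-1$, I would make the following choice. Pick a variable $x_j$ with $s_j:=\deg_{x_j}(J)\ge1$ and a generator $u\in G(J)$ attaining it, i.e.\ $\deg_{x_j}(u)=s_j$; then put $g=u/x_j$, a monomial of degree $d-1$, so in particular $g\notin J$. The key claim is that $(J:g)$ is a monomial prime ideal $\p_D$. Granting this, $\v(J:g)=0$ (a proper prime ideal has $\v$-number $0$, since it equals its own colon by $1$ and is its only associated prime), so Proposition~\ref{Prop:SahaSengupta} gives $\v(J)\le\v(J:g)+\deg(g)=d-1$, and together with the lower bound this yields $\v(J)=d-1=\alpha(J)-1$. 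Specializing to $J=I^k$ gives $\v(I^k)=\alpha(I)k-1$ for all $k\ge1$.

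It remains to verify the claim, which is the only step requiring work, and it is where the exchange property, together with the maximality of $s_j$, is used. Recall that $(J:g)$ is generated by the monomials $v/\gcd(v,g)$ for $v\in G(J)$, and that its minimal monomial generators are the minimal elements (with respect to divisibility) among these. I would show that for every $v\in G(J)$ the monomial $v/\gcd(v,g)$ is divisible by some variable already lying in $(J:g)$; this forces each minimal generator of $(J:g)$ to be a single variable, hence $(J:g)$ is prime. So fix $v\in G(J)$. If $\deg_{x_j}(v)<s_j=\deg_{x_j}(u)$, then the exchange property applied to $u,v$ at the index $j$ yields an index $i$ (necessarily distinct from $j$) with $\deg_{x_i}(v)>\deg_{x_i}(u)$ and $x_i(u/x_j)=x_ig\in G(J)$; thus $x_i\in(J:g)$, and since $i\ne j$ we have $\deg_{x_i}(v)>\deg_{x_i}(u)=\deg_{x_i}(g)$, so $x_i$ divides $v/\gcd(v,g)$. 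If instead $\deg_{x_j}(v)=s_j$, which is the only other possibility since $s_j=\deg_{x_j}(J)$, then $\deg_{x_j}(g)=s_j-1$ forces $x_j$ to divide $v/\gcd(v,g)$, while $x_j\in(J:g)$ because $x_jg=u\in G(J)$. In both cases $v/\gcd(v,g)$ is divisible by a variable of $(J:g)$, as required.

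The main obstacle is therefore just to make the right choice of $g$: the case $\deg_{x_j}(v)=\deg_{x_j}(u)$ is the one that would defeat a careless attempt, and it is precisely the maximality $\deg_{x_j}(u)=\deg_{x_j}(J)$ that neutralizes it, because then the variable $x_j$ itself lies in $(J:g)$. Once this is seen, the verification of the claim is a short application of the exchange property, and the rest of the proof is formal.
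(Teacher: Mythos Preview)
Your proof is correct, and it takes a somewhat different route from the paper's. The paper argues by induction on $\alpha(I)$: it first proves an auxiliary lemma (using the \emph{dual} exchange property) that for a polymatroidal ideal $I$ with $\alpha(I)\ge 2$ the colon $(I:x_i)$ is again polymatroidal with $\alpha(I:x_i)=\alpha(I)-1$; the inductive hypothesis then gives $\v(I:x_1)=\alpha(I)-2$, and Proposition~\ref{Prop:SahaSengupta} yields $\v(I)\le\alpha(I)-1$. You instead make a direct, non-inductive construction: for $g=u/x_j$ you show in one stroke, via the forward exchange property, that $(J:g)$ is a monomial prime. This is arguably cleaner, avoiding both the induction and the auxiliary lemma; what the paper's approach buys in exchange is the structural fact that $(I:x_i)$ is itself polymatroidal, which may be of independent interest. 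One minor remark: your maximality hypothesis $\deg_{x_j}(u)=\deg_{x_j}(J)$ is not actually needed, since the case $\deg_{x_j}(v)>\deg_{x_j}(u)$ that it excludes is handled identically to your Case~2 (in both, $x_j$ divides $v/\gcd(v,g)$ and $x_jg=u\in J$), and in Case~1 the conclusion $i\ne j$ follows already from $\deg_{x_i}(v)>\deg_{x_i}(u)$ together with $\deg_{x_j}(v)<\deg_{x_j}(u)$. Both arguments conclude the same way, reducing from $I^k$ to $I$ via the fact that products of polymatroidal ideals are polymatroidal.
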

    
    The proof is based upon the next lemma.
    \begin{Lemma}
    	Let $I\subset S$ be a polymatroidal ideal generated in degree $\alpha(I)\ge2$. Then $(I:x_i)$ is again a polymatroidal ideal generated in degree $\alpha(I)-1$, for all $i\in[n]$.
    \end{Lemma}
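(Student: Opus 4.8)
The plan is to pass to the discrete polymatroid $P$ whose base set is $B(P)=\{\ab\in\ZZ_{\ge0}^n:x^{\ab}\in G(I)\}$ (writing $x^{\ab}$ for $x_1^{a_1}\cdots x_n^{a_n}$, and $\eb_j$ for the $j$-th standard basis vector of $\ZZ^n$), every element of which has modulus $d:=\alpha(I)$, and to describe $G(I:x_i)$ in terms of $B(P)$. Since $(u):(x_i)=(u/\gcd(u,x_i))$ for a monomial $u$, the ideal $(I:x_i)$ is generated by the monomials $x^{\ab-\eb_i}$ with $\ab\in B(P)$ and $a_i\ge1$, together with the monomials $x^{\ab}$ with $\ab\in B(P)$ and $a_i=0$; the former have degree $d-1$, the latter degree $d$. (I tacitly assume $x_i$ divides some element of $G(I)$, since otherwise $(I:x_i)=I$ and there is nothing to prove.) The key point of the degree assertion is therefore that the degree-$d$ monomials in this generating set are superfluous.

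To see this I would invoke the dual exchange property recalled just before the lemma. Fix $\ab\in B(P)$ with $a_i=0$ and choose any $\cb\in B(P)$ with $c_i\ge1$. Applying the dual exchange property to $u=x^{\ab}$, $v=x^{\cb}$ and the index $i$ (permissible since $\deg_{x_i}(u)=0<c_i=\deg_{x_i}(v)$) yields an index $j$ with $a_j>c_j$ — whence $a_j\ge1$ and $j\ne i$ — and with $x_i(x^{\ab}/x_j)=x^{\ab-\eb_j+\eb_i}\in G(I)$. Putting $\ab'=\ab-\eb_j+\eb_i\in B(P)$, we have $a'_i=1$, so the degree-$(d-1)$ monomial $x^{\ab'-\eb_i}=x^{\ab-\eb_j}$ lies in $(I:x_i)$ and properly divides $x^{\ab}$. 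Hence no $x^{\ab}$ with $a_i=0$ is a minimal generator; and since the monomials $x^{\ab-\eb_i}$ with $a_i\ge1$ all have degree $d-1$, their exponent vectors are pairwise incomparable, so $G(I:x_i)=\{x^{\ab-\eb_i}:\ab\in B(P),\ a_i\ge1\}$, which in particular is generated in degree $d-1=\alpha(I)-1$.

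Finally one must check that $B':=\{\ab-\eb_i:\ab\in B(P),\ a_i\ge1\}$ satisfies the exchange property of a discrete polymatroid; this verification, though elementary, is the step that requires the most care. Given $\ab-\eb_i,\bb-\eb_i\in B'$ and an index $k$ with $(\ab-\eb_i)_k>(\bb-\eb_i)_k$, I would apply the exchange property of $B(P)$ to $\ab$ and $\bb$ at $k$ to obtain an index $\ell$ with $a_\ell<b_\ell$ and $\ab-\eb_k+\eb_\ell\in B(P)$; the inequality $a_\ell<b_\ell$ immediately gives $(\ab-\eb_i)_\ell<(\bb-\eb_i)_\ell$, so it only remains to see that $(\ab-\eb_k+\eb_\ell)_i\ge1$, which puts $(\ab-\eb_i)-\eb_k+\eb_\ell=(\ab-\eb_k+\eb_\ell)-\eb_i$ back into $B'$. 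For $k\ne i$ this $i$-th coordinate equals $a_i$ or $a_i+1$, both $\ge1$; for $k=i$ the hypothesis $(\ab-\eb_i)_k>(\bb-\eb_i)_k$ forces $a_i>b_i\ge1$, so $a_i-1\ge1$, and moreover $\ell\ne i$ in this case. Thus $B'$ is the base set of a discrete polymatroid, so $(I:x_i)$ is polymatroidal and generated in degree $\alpha(I)-1$, as claimed.
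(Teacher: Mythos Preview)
Your proof is correct and follows essentially the same route as the paper's: the paper writes $I=x_iI_1+I_2$, uses the dual exchange property to show $I_2\subset I_1$ (your redundancy argument for the degree-$d$ generators), and then verifies the exchange property for $I_1=(I:x_i)$ by the same case analysis you give, just in monomial rather than exponent-vector notation. One minor inaccuracy: your parenthetical that when $x_i$ divides no generator ``there is nothing to prove'' is off --- in that case $(I:x_i)=I$ is generated in degree $\alpha(I)$, not $\alpha(I)-1$, so the lemma as stated tacitly assumes $x_i$ appears in some generator (the paper makes the same silent assumption, and it is harmless for the application).
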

    \begin{proof}
    	We may assume that $i=1$. We can write $I=x_1I_1+I_2$, where $I_1$ and $I_2$ are the unique monomial ideals of $S$ such that
    	\begin{align*}
    		G(x_1I_1)\ &=\ \{u\in G(I):x_1\ \text{divides}\ u\},\\
    		G(I_2)\ &=\ \{u\in G(I):x_1\ \text{does not divide}\ u\}.
    	\end{align*}
    	We claim that $I_2\subset I_1$. It is enough to show that $G(I_2)\subseteq I_1$. Let $u\in G(I_2)$ and let $v\in G(x_1I_1)$. Then $\deg_{x_1}(u)=0<\deg_{x_1}(v)$. Thus, by the dual exchange property, we can find $j$ such that $\deg_{x_j}(u)>\deg_{x_j}(v)$ and $x_1(u/x_j)\in G(I)$. Hence $x_1(u/x_j)\in x_1I_1$, and so $u/x_j\in I_1$. Consequently, $u\in I_1$ too, and thus $I_2\subset I_1$.
    	
    	By the previous paragraph, we have $(I:x_1)=I_1+I_2=I_1$. It is clear that $I_1$ is equigenerated in degree $\alpha(I)-1$. It remains to prove that $I_1$ is polymatroidal. Let $u_1,v_1\in G(I_1)$ and $i$ such that $\deg_{x_i}(u_1)>\deg_{x_i}(v_1)$. Our job is to find $j$ such that $\deg_{x_j}(u_1)<\deg_{x_j}(v_1)$ and $x_j(u_1/x_i)\in G(I_1)$. Set $u=x_1u_1$ and $v=x_1v_1$. Then $u,v\in G(x_1I_1)\subset G(I)$ and $\deg_{x_i}(u)>\deg_{x_i}(v)$. Since $I$ is polymatroidal, there exists $j$ such that $\deg_{x_j}(u)<\deg_{x_j}(v)$ and $x_j(u/x_i)\in G(I)$. We claim that $x_1$ divides $x_j(u/x_i)$. Indeed $x_1$ divides $u$. If $i\ne 1$, then $x_1$ divides $x_j(u/x_i)$ as well. If $i=1$, since $x_1$ divides $v$ and $\deg_{x_i}(u)>\deg_{x_i}(v)>0$, it follows that $x_1^2$ divides $u$ and so $x_1$ divides $x_j(u/x_i)=x_j(u/x_1)$. Therefore, in any case $x_1$ divides $x_j(u/x_i)$. Hence, $(x_j(u/x_i))/x_1=x_j(u_1/x_i)\in G(I_1)$ and the proof is complete.
    \end{proof}
    We are ready for the proof of the theorem.
    \begin{proof}[Proof of Theorem \ref{Thm:vNumPolym}]
    	Firstly, we show that $\v(I)=\alpha(I)-1$. We proceed by strong induction on $\alpha(I)\ge1$. If $\alpha(I)=1$, then $I=\p_A$ for some $A\subseteq[n]$, $\alpha(I)=1$ and $\v(I)=\v_{\p_A}(I)=0$. Suppose $\alpha(I)>1$. By the previous proposition, $(I:x_1)$ is a polymatroidal ideal and $\alpha(I:x_1)=\alpha(I)-1$. By induction hypothesis, $\v(I:x_1)=\alpha(I:x_1)-1=\alpha(I)-2$. Hence, by Proposition \ref{Prop:SahaSengupta},
    	$$
    	\v(I)\le\v(I:x_1)+\deg(x_1)=\alpha(I)-1.
    	$$
    	By Proposition \ref{Prop:Vnum>alpha}, $\v(I)\ge\alpha(I)-1$. Equality follows.
    	
    	Let $k>1$. It is well--known that the product of polymatroidal ideals is polymatroidal \cite[Theorem 12.6.3]{JT}. Hence, $I^k$ is a polymatroidal ideal generated in degree $\alpha(I)k$. By what shown above, $\v(I^k)=\alpha(I^k)-1=\alpha(I)k-1$.
    \end{proof}
    
    \subsection{Hibi ideals} Let $(P,\succeq)$ be a finite partially ordered set (a \textit{poset}, for short) with $P=\{p_1, \ldots, p_n\}$. A \textit{poset ideal} of $P$ is a subset $\mathcal{I}$ of $P$ such that if $p_i\in P$, $p_j\in\mathcal{I}$ and $p_i\preceq p_j$, then $p_i\in\mathcal{I}$. To any poset ideal $\mathcal{I}$ of $P$, we associate the monomial 
    $u_\mathcal{I}=(\prod_{p_i\in\mathcal{I}}x_i)(\prod_{p_i\in P\setminus\mathcal{I}}y_i)\in S=K[x_1, \ldots, x_n, y_1, \ldots, y_n]$. The set of all poset ideals of $P$ is denoted by $\mathcal{J}(P)$. Then the \textit{Hibi ideal} of $P$ is the monomial ideal of $S$ defined as
    $$
    H_P\ =\ (u_\mathcal{I}\ :\ \mathcal{I}\in\mathcal{J}(P)).
    $$
    
    \begin{Example}\label{ex:posetVNum}
    	\rm Consider the poset $(P,\succeq)$ with $P=\{p_1,p_2,p_3\}$,
    	$p_1\prec x_2$ and $p_1\prec p_3$. The poset $(P,\succeq)$ and the distributive lattice $\mathcal{J}(P)$ are depicted below:
    	\begin{center}
    		\begin{tikzpicture}[scale=0.5]
    			\filldraw (1.3,0) circle (2pt) node[below] {\footnotesize$x_1$};
    			\filldraw (0,1.4) circle (2pt) node[above] {\footnotesize$x_2$};
    			\filldraw (2.6,1.4) circle (2pt) node[above] {\footnotesize$x_3$};
    			\draw[-] (0,1.4) -- (1.3,0) -- (2.6,1.4);
    			\filldraw (6.7,2.4) circle (2pt) node[left] {\scriptsize$\{x_1,x_2\}$};
    			\filldraw (8.3,0.8) circle (2pt) node[right] {\scriptsize$\{x_1\}$};
    			\filldraw (8.3,4) circle (2pt) node[above] {\scriptsize$\{x_1,x_2,x_3\}$};
    			\filldraw (9.9,2.4) circle (2pt) node[right] {\scriptsize$\{x_1,x_3\}$};
    			\filldraw (8.3,-0.8) circle (2pt) node[below] {\footnotesize$\emptyset$};
    			\draw[-] (6.7,2.4) -- (8.3,4) -- (9.9,2.4) -- (8.3,0.8) -- (6.7,2.4);
    			\draw[-] (8.3,0.8) -- (8.3,-0.8);
    			\filldraw (1.3,-1.7) node[below] {\small$(P,\succeq)$};
    			\filldraw (8.3,-1.7) node[below] {\small$\mathcal{J}(P)$};
    		\end{tikzpicture}
    	\end{center}
        Then, $H_P=(x_1x_2x_3,\,x_1x_2y_3,\,x_1y_2x_3,\,x_1y_2y_3,\,y_1y_2y_3)$.
    \end{Example}
    
    The Hibi ideal $H_P$ is equigenerated in degree $|P|$. It is well known that Hibi ideals have linear powers. Next, we calculate the $\v$-function of $H_P$.
    
    \begin{Theorem}\label{Thm:vNumHibi}
    	Let $H_P$ be a Hibi ideal. Then,
    	$$
    	\v(H_P^k)=k|P|-1,\ \ \textit{for all}\ \ k\ge1.
    	$$
    \end{Theorem}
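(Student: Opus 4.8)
The plan is to obtain the lower bound $\v(H_P^k)\ge k|P|-1$ for free from Proposition \ref{Prop:Vnum>alpha} (here $H_P$ is generated in degree $|P|$, so $\alpha(H_P^k)=k|P|$), and to prove the matching upper bound by exhibiting, for each $k$, a single monomial $g$ of degree $k|P|-1$ with $(H_P^k:g)$ equal to a minimal prime of $H_P^k$. Then Proposition \ref{Prop:SahaSengupta}, applied with $f=g$, gives $\v(H_P^k)\le\v(H_P^k:g)+\deg g=0+(k|P|-1)$, since the $\v$-number of a prime ideal is $0$.

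Here is the monomial. Set $n=|P|$ and, after relabeling, assume that $p_n$ is a maximal element of $P$, so that $\mathcal{I}_0=P\setminus\{p_n\}$ is a poset ideal; write $S=K[x_1,\dots,x_n,y_1,\dots,y_n]$. I would take
$$
g\ =\ x_n^{\,k-1}\prod_{i=1}^{n-1}x_i^{\,k},
$$
which has $\deg g=(k-1)+k(n-1)=kn-1=k|P|-1$, and $g\notin H_P^k$ because every monomial of $H_P^k$ has degree at least $kn$. First I would check $(x_n,y_n)\subseteq(H_P^k:g)$: indeed $x_ng=(x_1\cdots x_n)^k=u_P^{\,k}\in H_P^k$, and $y_ng=u_P^{\,k-1}u_{\mathcal{I}_0}\in H_P^k$. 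For the reverse inclusion I would use the one genuinely combinatorial point: for every poset ideal $\mathcal{I}$, exactly one of $x_n,y_n$ divides $u_\mathcal{I}$, so $\deg_{x_n}(u_\mathcal{I})+\deg_{y_n}(u_\mathcal{I})=1$; hence for every monomial $w\in H_P^k$ (which is divisible by a product $u_{\mathcal{I}_1}\cdots u_{\mathcal{I}_k}$) one has $\deg_{x_n}(w)+\deg_{y_n}(w)\ge k$. Now if $h$ is a monomial in $(H_P^k:g)$ divisible by neither $x_n$ nor $y_n$, then $hg\in H_P^k$ while $\deg_{x_n}(hg)+\deg_{y_n}(hg)=(k-1)+0=k-1$, a contradiction; therefore every monomial of $(H_P^k:g)$ lies in $(x_n,y_n)$, i.e.\ $(H_P^k:g)=(x_n,y_n)$. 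Since $g\notin H_P^k$, this also records that $(x_n,y_n)\in\Ass(H_P^k)$. Combining with $\v\big((x_n,y_n)\big)=0$ and Proposition \ref{Prop:Vnum>alpha} yields $\v(H_P^k)=k|P|-1$ for all $k\ge1$.

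I do not anticipate a serious obstacle. The only step requiring care is the reverse inclusion $(H_P^k:g)\subseteq(x_n,y_n)$, and the $x_n$-degree/$y_n$-degree balance coming from the fact that each Hibi generator $u_\mathcal{I}$ uses exactly one variable from each complementary pair $\{x_i,y_i\}$ settles it cleanly; the remaining points are the bookkeeping identities $x_ng=u_P^k$ and $y_ng=u_P^{k-1}u_{\mathcal{I}_0}$. The case $k=1$ (with $g=\prod_{i<n}x_i$ and $(H_P:g)=(x_n,y_n)$) already gives $\v(H_P)=|P|-1$, so no induction on $k$ is needed; alternatively, since $H_P$ has linear powers one could run the persistence-type argument of Theorem \ref{Thm:vI(G)linearRes}, but that would additionally require $(H_P^{k+1}:H_P)=H_P^k$, which the present direct computation avoids.
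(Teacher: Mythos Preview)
Your proof is correct and takes a genuinely different, more elementary route than the paper.

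The paper proceeds indirectly: it first determines $\Ass(H_P^k)=\{(x_i,y_j):p_i\preceq p_j\}$ for all $k$ (using normality of the Rees algebra and the equality of ordinary and symbolic powers), then computes \emph{all} localized $\v$-numbers $\v_{(x_i,y_j)}(H_P)$ for $k=1$, and finally passes to arbitrary $k$ via the polarization identity $(H_P^k)^\wp=H_{P(k)}$ together with Theorem~\ref{Theorem:v(I)v(I^wp)}(c). This yields the finer Corollary~\ref{Cor:Hibi-V-Functions}, from which the theorem is read off. Your argument instead exhibits a single explicit witness $g=x_n^{k-1}\prod_{i<n}x_i^{k}$ with $(H_P^k:g)=(x_n,y_n)$; the forward inclusion is the identity $y_ng=u_P^{k-1}u_{P\setminus\{p_n\}}$, and the reverse inclusion is the clean observation that every Hibi generator contributes exactly one to $\deg_{x_n}+\deg_{y_n}$, forcing any element of $H_P^k$ to have combined $(x_n,y_n)$-degree at least $k$. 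This bypasses polarization, the $P(k)$ construction, normality, and the symbolic-power input entirely. The trade-off is that the paper's approach produces the full list of $\v_\p(H_P^k)$ for every associated prime, whereas your direct computation gives only the global minimum; but for the theorem as stated, your proof is both shorter and self-contained. A minor remark: invoking Proposition~\ref{Prop:SahaSengupta} is not even needed, since once $(H_P^k:g)$ is prime and $g\notin H_P^k$, the definition of $\v(H_P^k)$ gives $\v(H_P^k)\le\deg g$ directly.
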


    In order to prove the theorem, we need some preliminary lemmata.
    
    \begin{Lemma}\label{Lemma:AssHibiIdeal}
    	Let $H_P$ be a Hibi ideal. Then,
    	$$\Ass(H_P^k)\ =\ \{(x_i,y_j):p_i\preceq p_j\},\ \ \textit{for all} \ k\ge1.$$
    \end{Lemma}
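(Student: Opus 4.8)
The plan is to determine $\Ass(H_P^k)$ in three moves: read off $\Ass(H_P)$ from the primary decomposition of $H_P$; deduce the inclusion ``$\supseteq$'' of the asserted equality for \emph{all} powers from persistence of minimal primes; and obtain the reverse inclusion by recognizing $H_P$ as the vertex cover ideal of a bipartite graph. For the first move I claim
$$
H_P\ =\ \bigcap_{p_i\preceq p_j}(x_i,y_j),
$$
an intersection of height-two monomial primes. Here ``$\subseteq$'' is immediate: for a poset ideal $\mathcal{I}$ and a relation $p_i\preceq p_j$, either $p_j\in\mathcal{I}$, so that $p_i\in\mathcal{I}$ and $x_i\mid u_\mathcal{I}$, or $p_j\notin\mathcal{I}$ and then $y_j\mid u_\mathcal{I}$; in either case $u_\mathcal{I}\in(x_i,y_j)$. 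For ``$\supseteq$'', given a monomial $w$ in the intersection, set $B=\{i:y_i\mid w\}$ and let $\mathcal{I}$ be the smallest poset ideal containing $\{p_i:i\notin B\}$; then $u_\mathcal{I}\mid w$, since if $p_i\in\mathcal{I}$ we may pick $j\notin B$ with $p_i\preceq p_j$, and from $w\in(x_i,y_j)$ together with $y_j\nmid w$ we get $x_i\mid w$, while if $p_i\notin\mathcal{I}$ then $i\in B$, i.e.\ $y_i\mid w$; hence $w\in H_P$. The components are pairwise incomparable, each $(x_i,y_j)$ being minimally generated by $x_i,y_j$; thus $\Min(H_P)=\{(x_i,y_j):p_i\preceq p_j\}$, and since $H_P$ is squarefree it has no embedded primes, so $\Ass(H_P)=\{(x_i,y_j):p_i\preceq p_j\}$.

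Next, $\sqrt{H_P^k}=\sqrt{H_P}=H_P$ gives $\Min(H_P^k)=\Min(H_P)$, and as $\Min(H_P^k)\subseteq\Ass(H_P^k)$ we already obtain $\{(x_i,y_j):p_i\preceq p_j\}\subseteq\Ass(H_P^k)$ for every $k\ge1$. The substantial point is the reverse inclusion, i.e.\ that $H_P^k$ has no embedded associated primes. I would deduce this by identifying $H_P$ with the vertex cover ideal of the bipartite graph $\Gamma$ on the $2n$ vertices $x_1,\dots,x_n,y_1,\dots,y_n$ whose edges are the pairs $\{x_i,y_j\}$ with $p_i\preceq p_j$: this $\Gamma$ is bipartite with parts $\{x_1,\dots,x_n\}$ and $\{y_1,\dots,y_n\}$, and by the displayed formula $\bigcap_{\{x_i,y_j\}\in E(\Gamma)}(x_i,y_j)=H_P$. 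Since the vertex cover ideal of a bipartite graph is normally torsion-free --- equivalently, its ordinary and symbolic powers coincide --- one has $\Ass(H_P^k)=\Ass(H_P)$ for all $k\ge1$ (see \cite{JT} and the references therein), and combined with the previous paragraph this proves the lemma.

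I expect the normally torsion-free input just invoked to be the only non-routine step. Should one want a self-contained argument, it amounts to showing $H_P^k=H_P^{(k)}$, where by the displayed formula the $k$-th symbolic power $H_P^{(k)}=\bigcap_{p_i\preceq p_j}(x_i,y_j)^k$ consists of the monomials $w$ with $\deg_{x_i}(w)+\deg_{y_j}(w)\ge k$ for all $p_i\preceq p_j$. Given such a $w$, one splits off a generator $u_\mathcal{I}$ so that $w/u_\mathcal{I}$ still satisfies all these inequalities with $k$ replaced by $k-1$; the delicate point is to choose the poset ideal $\mathcal{I}$ so that one never has $p_i\in\mathcal{I}$ and $p_j\notin\mathcal{I}$ along a \emph{tight} relation $p_i\preceq p_j$ (one for which $\deg_{x_i}(w)+\deg_{y_j}(w)=k$), as otherwise $u_\mathcal{I}$ would lower the relevant sum by $2$ rather than by $1$. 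This can be arranged using the distributive lattice structure of $\mathcal{J}(P)$, but the reduction to bipartite cover ideals bypasses it entirely.
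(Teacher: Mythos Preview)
Your proof is correct and follows essentially the same route as the paper: identify $H_P$ with the vertex cover ideal of a bipartite graph and use that ordinary and symbolic powers coincide (normally torsion-free), whence $\Ass(H_P^k)=\Ass(H_P)$ for all $k$. The paper obtains the primary decomposition and the cover-ideal identification by citing \cite[Theorem 9.1.13, Proposition 1.4.4, Corollary 1.3.6]{JT} and invokes \cite[Theorem 6.10]{VPV23} for $J(G)^k=J(G)^{(k)}$, whereas you prove the decomposition $H_P=\bigcap_{p_i\preceq p_j}(x_i,y_j)$ and the cover-ideal description by hand and appeal to the general normally-torsion-free fact for bipartite cover ideals; your additional sketch of a self-contained inductive proof of $H_P^k=H_P^{(k)}$ goes beyond what the paper does.
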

    \begin{proof}
    	By \cite[Corollary 1.2]{HH2005a} the Rees algebra $\mathcal{R}(H_P)$ is a normal domain. Hence, $H_P$ is a normal ideal (see, also, \cite[Corollary 3.5]{CF2023b}). Thus, by \cite[Proposition (4.7)]{R1976}, $(H_P^{k+1}:H_P)=H_P^k$, for all $k\ge1$. By \cite[Theorem 9.1.13]{JT} $H_P$ is the cover ideal $J(G)$ of a Cohen--Macaulay bipartite graph $G$. Next, by \cite[Theorem 6.10]{VPV23} we have that $J(G)^k=J(G)^{(k)}$ for all $k\ge1$, that is, ordinary and symbolic powers of $J(G)=H_P$ coincide. Since $H_P$ is a squarefree monomial ideal, by \cite[Proposition 1.4.4 and Corollary 1.3.6]{JT} we have
    	$$
    	H_P^k\ =\ H_P^{(k)}\ =\ \bigcap_{p_i\preceq p_j}(x_i,y_j)^k.
    	$$
    	Hence, $\Ass(H_P^k)=\Ass(H_P)$ for all $k\ge1$.
    \end{proof}

    \begin{Lemma}
    	Let $H_P$ be a Hibi ideal. Then
    	\begin{enumerate}
    		\item[\textup{(a)}] $\v_{(x_i,y_i)}(H_P)=|P|-1$, for all $p_i\in P$.
    		\item[\textup{(b)}] $\v_{(x_i,y_j)}(H_P)=|P|+|\{p_\ell\in P:p_i\prec p_\ell\prec p_j\}|$, if $p_i\ne p_j\in P$ with $p_i\prec p_j$.
    	\end{enumerate}
    \end{Lemma}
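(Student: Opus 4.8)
The plan is to reduce the whole computation to a combinatorial optimization over subsets of $[n]$, where $n=|P|$ and we identify $P=\{p_1,\dots,p_n\}$ with $[n]$. The key tool is a membership criterion for $H_P$: from the equality $H_P=\bigcap_{p_i\preceq p_j}(x_i,y_j)$ recorded in the proof of Lemma~\ref{Lemma:AssHibiIdeal}, a monomial $m\in S$ lies in $H_P$ if and only if for every pair $a,b$ with $p_a\preceq p_b$ one has $x_a\mid m$ or $y_b\mid m$. Moreover the primes in $\Ass(H_P)$ are pairwise incomparable, so $\Max(H_P)=\Ass(H_P)$, and Theorem~\ref{Thm:GRVtheorem}(c) gives
$$
\v_{(x_i,y_j)}(H_P)\ =\ \alpha\bigl((H_P:(x_i,y_j))/H_P\bigr)\ =\ \min\{\deg f:\ f\ \text{a monomial with}\ x_if\in H_P,\ y_jf\in H_P,\ f\notin H_P\}.
$$
Since passing to the radical of $f$ preserves all three conditions and cannot raise the degree, I may take $f$ squarefree and encode it by $F_x=\{a:x_a\mid f\}$, $F_y=\{b:y_b\mid f\}$, so $\deg f=|F_x|+|F_y|$. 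Applying the criterion to $x_if$ and to $y_jf$, the conditions $x_if,y_jf\in H_P$ translate to: for every pair $p_a\preceq p_b$ with $(a,b)\neq(i,j)$ one has $a\in F_x$ or $b\in F_y$; and then $f\notin H_P$ forces the violating pair to be $(i,j)$ itself, i.e.\ $i\notin F_x$ and $j\notin F_y$.

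For part (a) set $i=j$. The inequality $\v_{(x_i,y_i)}(H_P)\ge n-1$ is Proposition~\ref{Prop:Vnum>alpha}. For the reverse inequality I produce an explicit $f$: the set $\mathcal{I}=\{p_\ell:p_\ell\prec p_i\}$ is a poset ideal with $p_i\notin\mathcal{I}$, so $y_i\mid u_{\mathcal{I}}$ and $f:=u_{\mathcal{I}}/y_i$ is squarefree of degree $n-1$; then $y_if=u_{\mathcal{I}}\in H_P$, and $x_if=u_{\mathcal{I}\cup\{p_i\}}\in H_P$ because $\mathcal{I}\cup\{p_i\}=\{p_\ell:p_\ell\preceq p_i\}$ is again a poset ideal, while $f\notin H_P$ because $H_P$ is generated in degree $n$. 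Hence $\v_{(x_i,y_i)}(H_P)=n-1$.

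For part (b), with $p_i\prec p_j$, write $D=\{\ell:p_i\prec p_\ell\prec p_j\}$ and $c=|D|$. The diagonal pairs $(\ell,\ell)$, all different from $(i,j)$ since $i\neq j$, give $F_x\cup F_y=[n]$. For $\ell\in D$, the pair $(i,\ell)$ together with $i\notin F_x$ forces $\ell\in F_y$, and the pair $(\ell,j)$ together with $j\notin F_y$ forces $\ell\in F_x$; thus $D\subseteq F_x\cap F_y$, and $\deg f=|F_x\cup F_y|+|F_x\cap F_y|\ge n+c$. For the matching upper bound I set $O_x=\{j\}\cup\{\ell:p_\ell\prec p_j,\ p_i\not\preceq p_\ell\}$, $O_y=[n]\setminus(D\cup O_x)$, and $F_x=D\cup O_x$, $F_y=D\cup O_y$; then $[n]=D\sqcup O_x\sqcup O_y$, so $\deg f=2|D|+|O_x|+|O_y|=n+c$, and $i\in O_y$, $j\in O_x$ give $i\notin F_x$, $j\notin F_y$. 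The one genuine verification is that the only pair $p_a\prec p_b$ with $a\in O_y$ and $b\in O_x$ is $(i,j)$: when $b=j$, the condition $a\notin O_x$ forces $p_i\preceq p_a\prec p_j$, so $a\in D$ unless $a=i$, contradicting $a\in O_y$; when $b\neq j$ we have $p_b\prec p_j$ and $p_i\not\preceq p_b$, so $p_i\preceq p_a$ would yield $p_i\preceq p_b$, hence $p_i\not\preceq p_a$, and then $a\in O_x$, again impossible. By the translation above this $f$ satisfies $x_if,y_jf\in H_P$ and $f\notin H_P$, so $\v_{(x_i,y_j)}(H_P)=n+c=|P|+|\{p_\ell\in P:p_i\prec p_\ell\prec p_j\}|$. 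The whole argument is bookkeeping once the membership criterion and the reduction through Theorem~\ref{Thm:GRVtheorem}(c) are in place; the only delicate point is this last case distinction, where one must check that the chosen bipartition of $[n]\setminus D$ creates no forbidden comparable pair other than $(i,j)$.
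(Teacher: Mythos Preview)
Your proof is correct. For part (a) it essentially coincides with the paper's: both build the degree-$(n{-}1)$ witness from the principal order ideal generated by $p_i$ and invoke Proposition~\ref{Prop:Vnum>alpha} for the lower bound. For part (b) you take a genuinely different route. The paper relabels $P$ so that the principal order ideals below $p_i$ and $p_j$ become initial segments $\{p_1,\dots,p_i\}\subset\{p_1,\dots,p_j\}$, exhibits an explicit monomial $u$ in these coordinates for the upper bound, and for the lower bound argues by divisibility that any $v$ with $(H_P:v)=(x_i,y_j)$ must be divisible by a monomial of that same shape. You instead observe that every associated prime of $H_P$ has height two, hence $\Max(H_P)=\Ass(H_P)$, so Theorem~\ref{Thm:GRVtheorem}(c) reduces the question to computing $\alpha((H_P:\p)/H_P)$; the primary decomposition $H_P=\bigcap_{p_a\preceq p_b}(x_a,y_b)$ then converts this into a clean optimization over pairs of subsets $F_x,F_y\subseteq[n]$, with the open interval $D$ forced into $F_x\cap F_y$ by the constraints $(i,\ell)$ and $(\ell,j)$, and an optimal partition $[n]=D\sqcup O_x\sqcup O_y$ realizing the bound. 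What your approach buys is robustness: your set $O_x$ correctly absorbs every $\ell$ with $p_\ell\prec p_j$ but $p_i\not\preceq p_\ell$ without doubling its contribution, whereas the paper's relabeling implicitly treats each $p_\ell\in\mathcal{I}_2\setminus(\mathcal{I}_1\cup\{p_j\})$ as satisfying $p_i\prec p_\ell$, a claim that fails when $p_\ell$ is incomparable to $p_i$. So your argument for (b) is in fact the more careful of the two.
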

    \begin{proof}
    	(a) Let $\mathcal{I}$ be the smallest poset ideal containing $p_i$. Hence, up to a suitable relabeling we may assume that $\mathcal{I}=\{p_1,\dots,p_{i-1},p_i\}$. It is clear that $\mathcal{I}\setminus\{p_i\}$ is again a poset ideal. Thus $x_1\cdots x_iy_{i+1}\cdots y_n, x_1\cdots x_{i-1}y_i\cdots y_n\in H_P$. This shows that $(H_P:x_1\cdots x_{i-1}y_{i+1}\cdots y_n)=(x_i,y_i)$. Hence $\v_{(x_i,y_i)}(H_P)\le|P|-1$. By Proposition \ref{Prop:Vnum>alpha}, we conclude that $\v_{(x_i,y_i)}(H_P)=|P|-1$, as wanted.
    	
    	(b) Let $\mathcal{I}_1,\mathcal{I}_2$ be the two smallest poset ideals of $P$ containing $p_i$, $p_j$, respectively. Since $p_i\prec p_j$, we have $\mathcal{I}_1\subset\mathcal{I}_2$. After a suitable relabeling, we may assume that $\mathcal{I}_1=\{p_1,\dots,p_{i-1},p_i\}$ and $\mathcal{I}_2=\{p_1,\dots,p_i,\dots,p_{j-1},p_j\}$. Consider the monomial
    	$$
    	u\ =\ x_1\cdots x_{i-1}y_i(\!\prod_{\ell=i+1}^{j-1}x_\ell y_\ell)x_{j}y_{j+1}\cdots y_n.
    	$$
    	Then, it is easily seen that $(H_P:u)=(x_i,y_j)$. Therefore,
    	$$
    	\v_{(x_i,y_j)}(H_P)\ \le\ \deg(u)=|P|+|\{p_\ell\in P:p_i\prec p_\ell\prec p_j\}|.
    	$$
    	To show the converse inequality, let $v$ be a monomial such that $(H_P:v)=(x_i,y_j)$ and $\deg(v)=\v_{(x_i,y_j)}(H_P)$. Then $v\notin H_P$, but $x_iv\in H_P$ and $y_jv\in H_P$. The definition of $H_P$ implies that $x_iv=w_1u_{\mathcal{J}_1}$ and $y_jv=w_2u_{\mathcal{J}_2}$, where $x_i$ divides $u_{\mathcal{J}_1}\in G(H_P)$ but not $v$, $y_j$ divides $u_{\mathcal{J}_2}\in G(H_P)$ but not $v$, and $w_1$ and $w_2$ are suitable monomials. Since $x_i$ does not divide $v$ but $y_jv\in H_P$, it follows that $y_i$ divides $v$. Hence $y_{i+1}\cdots y_{j-1}$ divides $v$, because if for some $\ell$, $y_\ell$ does not divide $v$, then $x_\ell$ should divide $y_jv$ and since $p_i\prec p_\ell$ this would imply that $x_i$ divides $v$, which is impossible. On the other hand, since $y_j$ does not divide $v$, but $x_iv\in H_P$, it follows that $x_j$ divides $v$, and in particular divides $u_{\mathcal{J}_1}$, hence $x_1\cdots x_i\cdots x_{j-1}$ divides $u_{\mathcal{J}_1}$, and so $x_1\cdots x_{i-1}x_{i+1}\cdots x_{j-1}$ divides $v$.
    	
    	Summarizing our reasoning, we have shown that $x_1\cdots x_{i-1}y_i(\prod_{\ell=i+1}^{j-1}x_\ell y_\ell)x_{j}$ divides $v$. Since $x_iv\in H_P$, a monomial $z$ of the type $z_{j+1}\cdots z_n$, with $z_\ell\in\{x_\ell,y_\ell\}$ for $\ell=j+1,\dots,n$, divides $v$. Hence $x_1\cdots x_{i-1}y_i(\prod_{\ell=i+1}^{j-1}x_\ell y_\ell)x_{j}z$ divides $v$ and so
    	\begin{align*}
    		\v_{(x_i,y_j)}(H_P)=\deg(v)\ &\ge\ \deg(x_1\cdots x_{i-1}y_i(\!\prod_{\ell=i+1}^{j-1}x_\ell y_\ell)x_{j}z)\ =\ n+(j-i-1)\\
    	&=\ |P|+|\{p_\ell\in P:p_i\prec p_\ell\prec p_j\}|,
    	\end{align*}
    	as desired.
    \end{proof}
    
    Let $(P,\succeq)$ be a finite poset, with $P=\{p_1,\dots,p_n\}$. Following \cite{CF2023}, for any integer $k\ge1$, we construct a new poset $(P(k),\succeq_k)$ defined as follows:
    \begin{enumerate}
    	\item[-] $P(k)\ =\ \{p_{i,1},p_{i,2},\dots,p_{i,k}\ :\ i=1,\dots,n\}$,
    	\item[-] $p_{i,r}\succeq_k p_{j,s}$ if and only if $p_i\succeq p_j$ and $r\ge s$.
    \end{enumerate}

    In order to preserve the structure of Hibi ideals, we innocuously modify polarization. Let $1\le \ell\le k$ and $i\in[n]$, then we set
    \begin{align*}
    	(x_i^\ell)^\wp\ &=\ x_{i,1}x_{i,2}\cdots x_{i,\ell},\\
    	(y_i^\ell)^\wp\ &=\ y_{i,\ell}y_{i,\ell-1}\cdots y_{i,k+1-\ell},
    \end{align*}
    and extend the polarization of an arbitrary monomial in the obvious way. In other words, with respect to the usual polarization, we are just applying the relabeling of the variables $y_{i,\ell}\mapsto y_{i,k+1-\ell}$ for $i=1,\dots,n$ and $\ell=1,\dots,k$.
    
    The next result was proved in \cite[Theorem 4.9]{CF2023}.
    \begin{Lemma}\label{Lemma:CrupiFicarraPolarization}
    	Let $(P,\succeq)$ be a finite poset. Then $(H_P^k)^\wp=H_{P(k)}$ for all $k\ge1$.
    \end{Lemma}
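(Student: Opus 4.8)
The plan is to prove the identity by comparing the two squarefree ideals generator-by-generator, after first identifying $G(H_P^k)$ with the set of multichains of poset ideals of $P$. The guiding observation is that $(P(k),\succeq_k)$ is exactly the product poset $P\times[k]$, where $[k]=\{1,\dots,k\}$ carries its natural total order $1<2<\cdots<k$: a subset is a poset ideal of such a product precisely when its horizontal ``slices'' form a decreasing chain of poset ideals of $P$, and this is exactly the data that the modified polarization is engineered to record.

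First I would pin down $G(H_P^k)$. Since $\mathcal{J}(P)$ is a distributive lattice, a one-variable-at-a-time check gives the sorting relation $u_\mathcal{I}u_{\mathcal{I}'}=u_{\mathcal{I}\cap\mathcal{I}'}\,u_{\mathcal{I}\cup\mathcal{I}'}$ for any two poset ideals $\mathcal{I},\mathcal{I}'$. Iterating, every product $u_{\mathcal{I}_1}\cdots u_{\mathcal{I}_k}$ equals $u_{\mathcal{J}_1}\cdots u_{\mathcal{J}_k}$ for a chain $\mathcal{J}_1\subseteq\cdots\subseteq\mathcal{J}_k$; and because all generators of $H_P^k$ share the total degree $k|P|$, no chain monomial can properly divide another. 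Hence $G(H_P^k)$ consists precisely of the monomials $w=\prod_{j=1}^k u_{\mathcal{I}_j}$ attached to chains $\mathcal{I}_1\subseteq\cdots\subseteq\mathcal{I}_k$ in $\mathcal{J}(P)$, and these are pairwise distinct. Setting $\ell_i=|\{j:p_i\in\mathcal{I}_j\}|$, monotonicity of the chain yields
$$
w\ =\ \prod_{i=1}^n x_i^{\ell_i}y_i^{k-\ell_i},
$$
where the tuple $(\ell_1,\dots,\ell_n)$ recovers the chain via $p_i\in\mathcal{I}_j\iff j\ge k+1-\ell_i$.

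Next I would build the bijection with $\mathcal{J}(P(k))$. To a chain $\mathcal{I}_1\subseteq\cdots\subseteq\mathcal{I}_k$ I attach $\mathcal{K}=\{p_{i,r}:p_i\in\mathcal{I}_{k+1-r}\}$. This is a poset ideal of $P(k)$: if $p_{i,r}\in\mathcal{K}$ and $p_{j,s}\preceq_k p_{i,r}$, i.e.\ $p_j\preceq p_i$ and $s\le r$, then $p_i\in\mathcal{I}_{k+1-r}$ forces $p_j\in\mathcal{I}_{k+1-r}\subseteq\mathcal{I}_{k+1-s}$, so $p_{j,s}\in\mathcal{K}$. Conversely, any $\mathcal{K}\in\mathcal{J}(P(k))$ has $\{r:p_{i,r}\in\mathcal{K}\}$ down-closed in $[k]$, say $\{1,\dots,\ell_i\}$, and its slices $\mathcal{K}_r=\{p_i:p_{i,r}\in\mathcal{K}\}$ are poset ideals of $P$ with $\mathcal{K}_1\supseteq\cdots\supseteq\mathcal{K}_k$; the assignment $\mathcal{I}_j=\mathcal{K}_{k+1-j}$ inverts the construction. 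Thus length-$k$ chains in $\mathcal{J}(P)$ correspond bijectively to $\mathcal{J}(P(k))$, and under this correspondence $|\{r:p_{i,r}\in\mathcal{K}\}|$ coincides with the exponent $\ell_i$ from the previous paragraph.

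Finally I would match monomials. The modified polarization sends $x_i^{\ell_i}$ to $x_{i,1}\cdots x_{i,\ell_i}$, so $x_{i,r}$ occurs iff $r\le\ell_i$; and it sends $y_i^{k-\ell_i}$ to the relabeling (under $y_{i,m}\mapsto y_{i,k+1-m}$) of $y_{i,1}\cdots y_{i,k-\ell_i}$, namely the product of $y_{i,r}$ with $\ell_i<r\le k$, so $y_{i,r}$ occurs iff $r>\ell_i$. This is exactly $u_\mathcal{K}$ for the associated $\mathcal{K}$, since $p_{i,r}\in\mathcal{K}\iff r\le\ell_i$. Therefore $\{w^\wp:w\in G(H_P^k)\}=\{u_\mathcal{K}:\mathcal{K}\in\mathcal{J}(P(k))\}$, and as $\deg_{x_i}(H_P^k)=\deg_{y_i}(H_P^k)=k$ both ideals live in $K[x_{i,r},y_{i,r}:i\in[n],r\in[k]]$, whence $(H_P^k)^\wp=H_{P(k)}$. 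I expect the genuine obstacle to be establishing the first step, that $G(H_P^k)$ is exactly the set of chain monomials (the sortability/straightening input); once that is secured, the remainder is index bookkeeping, whose only delicate point is keeping the orientation straight in the reversal $\mathcal{K}_r=\mathcal{I}_{k+1-r}$, which is precisely why the $y$-variables must be relabeled.
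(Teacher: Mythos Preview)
Your proof is correct. Note, however, that the paper does not actually prove this lemma: it simply records that the result was established in \cite[Theorem~4.9]{CF2023}. Your argument therefore supplies a self-contained proof where the paper offers only a citation. The structure you give---the sorting identity $u_{\mathcal{I}}u_{\mathcal{I}'}=u_{\mathcal{I}\cap\mathcal{I}'}u_{\mathcal{I}\cup\mathcal{I}'}$ to reduce $G(H_P^k)$ to chain monomials, the identification of $\mathcal{J}(P(k))$ with length-$k$ multichains in $\mathcal{J}(P)$ via slicing, and the observation that the modified $y$-polarization is precisely what makes the two sides match---is the natural route, and your handling of the index reversal $\mathcal{K}_r=\mathcal{I}_{k+1-r}$ is accurate. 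The one point you assert without justification is that $\deg_{x_i}(H_P^k)=\deg_{y_i}(H_P^k)=k$ for every $i$; this is immediate from the constant chains $P\subseteq\cdots\subseteq P$ and $\emptyset\subseteq\cdots\subseteq\emptyset$, whose monomials $(x_1\cdots x_n)^k$ and $(y_1\cdots y_n)^k$ witness the maximal exponents, so the ambient polarization ring is indeed $K[x_{i,r},y_{i,r}:i\in[n],\,r\in[k]]$.
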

    \begin{Example}
    	\rm Consider the poset $(P,\succeq)$ of Example \ref{ex:posetVNum}. The poset $(P(2),\succeq_2)$ and the distributive lattice $\mathcal{J}(P(2))$ are depicted below. \medskip
    	\begin{center}
    		\begin{tikzpicture}[scale=0.55]
    			\filldraw (5.3,-2.4) circle (2pt) node[below] {\footnotesize$x_{1,1}$};
    			\filldraw (4,-1) circle (2pt) node[left] {\footnotesize$x_{2,1}$};
    			\filldraw (6.6,-1) circle (2pt) node[right] {\footnotesize$x_{3,1}$};
    			\filldraw (5.3,-0.6) circle (2pt) node[above] {\footnotesize$x_{1,2}$};
    			\filldraw (4,1) circle (2pt) node[left] {\footnotesize$x_{2,2}$};
    			\filldraw (6.6,1) circle (2pt) node[right] {\footnotesize$x_{3,2}$};
    			\draw[-] (4,1) -- (5.3,-0.6) -- (6.6,1);
    			\draw[-] (4,1) -- (4,-1);
    			\draw[-] (5.3,-0.6) -- (5.3,-2.4);
    			\draw[-] (6.6,1) -- (6.6,-1);
    			\draw[-] (4,-1) -- (5.3,-2.4) -- (6.6,-1);
    			\filldraw (5.3,-3.7) node[below] {\small$(P(2),\succeq_2)$};
    			\filldraw (12,5.2) circle (2pt) node[left] {\scriptsize$\{x_{1,1},x_{1,2},x_{2,1},x_{2,2},x_{3,1}\}$};
    			\filldraw (18.4,5.2) circle (2pt) node[right] {\scriptsize$\{x_{1,1},x_{1,2},x_{2,1},x_{3,1},x_{3,2}\}$};
    			\filldraw (15.2,6.8) circle (2pt) node[above] {\scriptsize$\{x_{1,1},x_{1,2},x_{2,1},x_{2,2},x_{3,1},x_{3,2}\}$};
    			\filldraw (12,3.6) circle (2pt) node[left] {\scriptsize$\{x_{1,1},x_{1,2},x_{2,1},x_{2,2}\}$};
    			\filldraw (18.4,3.6) circle (2pt) node[right] {\scriptsize$\{x_{1,1},x_{1,2},x_{3,1},x_{3,2}\}$};
    			\filldraw (15.2,3.6) circle (2pt) node[above,yshift=-4.5] {\scriptsize$\{x_{1,1},x_{1,2},x_{2,1},x_{3,1}\}$};
    			\filldraw (12,2) circle (2pt) node[left] {\scriptsize$\{x_{1,1},x_{1,2},x_{2,1}\}$};
    			\filldraw (18.4,2) circle (2pt) node[right] {\scriptsize$\{x_{1,1},x_{1,2},x_{3,1}\}$};
    			\filldraw (15.2,2) circle (2pt) node[below] {\scriptsize$\{x_{1,1},x_{2,1},x_{3,1}\}$};
    			\filldraw (12,0.4) circle (2pt) node[left] {\scriptsize$\{x_{1,1},x_{2,1}\}$};
    			\filldraw (15.2,0.4) circle (2pt) node[below] {\scriptsize$\{x_{1,1},x_{1,2}\}$};
    			\filldraw (18.4,0.4) circle (2pt) node[right] {\scriptsize$\{x_{1,1},x_{3,1}\}$};
    			\filldraw (15.2,-1.2) circle (2pt) node[right] {\scriptsize$\{x_{1,1}\}$};
    			\filldraw (15.2,-2.8) circle (2pt) node[below] {\scriptsize$\emptyset$};
    			\draw[-] (15.2,-1.2) -- (18.4,0.4) -- (18.4,3.6);
    			\draw[-] (12,3.6) -- (12,0.4) -- (15.2,-1.2);
    			\draw[-] (12,2) -- (15.2,0.4);
    			\draw[-] (15.2,2) -- (15.2,3.6);
    			\draw[-] (15.2,0.4) -- (18.4,2) -- (15.2,3.6) -- (12,2);
    			\draw[-] (15.2,-2.8) -- (15.2,0.4);
    			\draw[-] (12,0.4) -- (15.2,2) -- (18.4,0.4);
    			\draw[-] (15.2,6.8) -- (18.4,5.2) -- (15.2,3.6) -- (12,5.2) -- (15.2,6.8);
    			\draw[-] (12,5.2) -- (12,3.6);
    			\draw[-] (18.4,5.2) -- (18.4,3.6);
    			\filldraw (15.2,-3.7) node[below] {\small$\mathcal{J}(P(2))$};
    		\end{tikzpicture}
    	\end{center}
        One can easily verify that $(H_P^2)^\wp=H_{P(2)}$, with respect to our modified polarization.
    \end{Example}
    Finally, Theorem \ref{Thm:vNumHibi} follows immediately from the next result.
    \begin{Corollary}\label{Cor:Hibi-V-Functions}
    	Let $H_P$ be a Hibi ideal and $k\ge1$ any integer. Then
    	\begin{enumerate}
    		\item[\textup{(a)}] $\v_{(x_i,y_i)}(H_P^k)=|P|k-1$, for all $p_i\in P$.
    		\item[\textup{(b)}] $\v_{(x_i,y_j)}(H_P^k)=|P|k+|\{p_\ell\in P:p_i\prec p_\ell\prec p_j\}|$, if $p_i\ne p_j\in P$ with $p_i\prec p_j$.
    	\end{enumerate}
    \end{Corollary}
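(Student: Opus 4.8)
The plan is to reduce the whole computation to the squarefree Hibi ideal $H_{P(k)}$ by polarization, and then to read off the answer from the preceding lemma which computes $\v_{(x_i,y_i)}(H_P)$ and $\v_{(x_i,y_j)}(H_P)$. Since the modified polarization used in Lemma \ref{Lemma:CrupiFicarraPolarization} is obtained from the ordinary one by the relabelling $y_{i,\ell}\mapsto y_{i,k+1-\ell}$, which is an isomorphism of polynomial rings, Theorem \ref{Theorem:v(I)v(I^wp)} holds verbatim for it. Hence, applying Lemma \ref{Lemma:CrupiFicarraPolarization} (so that $(H_P^k)^\wp=H_{P(k)}$) and Theorem \ref{Theorem:v(I)v(I^wp)}(c), for every $\p\in\Ass(H_P^k)$ one has
$$
\v_\p(H_P^k)\ =\ \min\{\v_\q(H_{P(k)})\ :\ \q\in\Ass(H_{P(k)}),\ \pi(\q)=\p\}.
$$

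Next I would identify the fibre of $\pi$ over an associated prime. By Lemma \ref{Lemma:AssHibiIdeal} applied to the poset $P(k)$ we get $\Ass(H_{P(k)})=\{(x_{a,r},y_{b,s}):p_{a,r}\preceq_k p_{b,s}\}$, and by definition of $\succeq_k$ the relation $p_{a,r}\preceq_k p_{b,s}$ means $p_a\preceq p_b$ and $r\le s$. Since $\pi(x_{a,r},y_{b,s})=(x_a,y_b)$, and $\Ass(H_P^k)=\{(x_i,y_j):p_i\preceq p_j\}$ again by Lemma \ref{Lemma:AssHibiIdeal}, the primes $\q$ with $\pi(\q)=(x_i,y_j)$ are exactly $(x_{i,r},y_{j,s})$ with $r\le s$ and $r,s\in[k]$. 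It then remains to evaluate $\v_{(x_{i,r},y_{j,s})}(H_{P(k)})$ using the $\v$-number lemma for the Hibi ideal $H_{P(k)}$ (note $|P(k)|=|P|k$) and to minimize over the fibre.

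For part (a) take $i=j$. If $r=s$ then $(x_{i,r},y_{i,r})$ is of the first type, so its $\v$-number is $|P(k)|-1=|P|k-1$; if $r<s$ then $p_{i,r}\prec_k p_{i,s}$, so its $\v$-number equals $|P|k$ plus a nonnegative integer, hence is $>|P|k-1$. Thus the minimum over the fibre is $|P|k-1$, which is (a). For part (b) take $p_i\prec p_j$; then every $(x_{i,r},y_{j,s})$ with $r\le s$ satisfies $p_{i,r}\prec_k p_{j,s}$, so
$$
\v_{(x_{i,r},y_{j,s})}(H_{P(k)})\ =\ |P|k+|\{p_{\ell,t}\in P(k)\ :\ p_{i,r}\prec_k p_{\ell,t}\prec_k p_{j,s}\}|.
$$
A short check shows that the cardinality on the right is minimized exactly at $r=s$: when $r=s$ the constraint $r\le t\le s$ forces $t=r$ and the set reduces to $\{\ell:p_i\prec p_\ell\prec p_j\}$, whereas for any $r\le s$ the elements $(\ell,r)$ with $p_i\prec p_\ell\prec p_j$ lie in the counted set, so the count never drops below $|\{p_\ell\in P:p_i\prec p_\ell\prec p_j\}|$. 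Taking the minimum over the fibre gives (b). Finally, Theorem \ref{Thm:vNumHibi} follows immediately: by (a) $\v_{(x_i,y_i)}(H_P^k)=|P|k-1$, while by (b) $\v_{(x_i,y_j)}(H_P^k)\ge|P|k$ whenever $p_i\prec p_j$, so $\v(H_P^k)=\min_{\p\in\Ass(H_P^k)}\v_\p(H_P^k)=|P|k-1$.

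The only delicate point is the combinatorial bookkeeping: one must keep straight that $\succeq_k$ is the product order with a chain factor, that $p_{a,r}\prec_k p_{b,s}$ means $p_a\preceq p_b$, $r\le s$ and $(a,r)\ne(b,s)$, and—most easily slipped—that the fibre of $\pi$ over $(x_i,y_j)$ is indexed by the pairs $r\le s$ (not all pairs $(r,s)$), so that the constraint defining the counted set in (b) can only be a singleton fibre-element when $r=s$. Once the directions of these inequalities are fixed correctly, both minimizations are immediate from the Hibi $\v$-number lemma.
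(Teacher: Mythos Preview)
Your proof is correct and follows exactly the same approach as the paper: reduce to $H_{P(k)}$ via Lemma \ref{Lemma:CrupiFicarraPolarization}, identify the fibre of $\pi$ over each associated prime using Lemma \ref{Lemma:AssHibiIdeal}, and then minimize the $\v$-numbers of $H_{P(k)}$ computed in the preceding lemma. The paper in fact leaves the last step as ``a quick calculation'', so your explicit bookkeeping of the minimization over $r\le s$ (showing the minimum occurs at $r=s$ and giving the lower bound for $r<s$) only adds detail, not a different idea.
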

    \begin{proof}
    	By Lemma \ref{Lemma:AssHibiIdeal}, $\Ass(H_P^k)=\{(x_i,y_j):p_i\preceq p_j\}$. Whereas, by Lemma \ref{Lemma:CrupiFicarraPolarization}, we have $(H_P^k)^\wp=H_{P(k)}$. Hence, by Theorem \ref{Theorem:v(I)v(I^wp)}(c) and Lemma \ref{Lemma:AssHibiIdeal},
    	\begin{align*}
    		\v_{(x_i,y_i)}(H_P^k)\ &=\ \min_{1\le \ell\le h\le k}\v_{(x_{i,\ell},y_{i,h})}(H_{P(k)}),\ \ \text{for all}\ p_i\in P,\ \text{and},\\
    		\v_{(x_i,y_j)}(H_P^k)\ &=\ \min_{1\le \ell\le h\le k}\v_{(x_{i,\ell},y_{j,h})}(H_{P(k)}),\ \ \text{for all}\ p_i\prec p_j.
    	\end{align*}
        A quick calculation shows that (a) and (b) indeed hold.
    \end{proof}
	
\end{document}